\numberwithin{equation}{section}
\newtheorem{theorem}{Theorem}[section]
\newtheorem{lemma}[theorem]{Lemma}
\newtheorem{corollary}[theorem]{Corollary}
\theoremstyle{remark}
\newtheorem{remark}[theorem]{Remark}
\newcommand{\be}{\begin{equation}}
\newcommand{\ee}{\end{equation}}
\newcommand{\beq}{\begin{equation}}
\newcommand{\enq}{\end{equation}}
\newcommand{\R}{{\mathbb{R}}}
\newcommand{\Nn}{{\mathbb{N}}}
\newcommand{\Cc}{{\mathbb{C}}}
\newcommand{\C}{{\mathbb{C}}}
\def\({\left(}
\def\){\right)}
\newcommand{\Zz}{{\mathbb{Z}}}
\newcommand{\supp}{{\mbox{\rm supp}}}
\newcommand{\von}{\varepsilon}
\newcommand{\eps}{\varepsilon}
\renewcommand{\epsilon}{\varepsilon}
\title{On a class of non-self-adjoint periodic eigenproblems with boundary and interior singularities\tnoteref{t1}\tnoteref{t2}}
\author[lb]{Lyonell Boulton\corref{cor1}}\ead{L.Boulton@ma.hw.ac.uk}
\author[ml]{Michael Levitin\fnref{grant}}\ead{levitin@reading.ac.uk}
\author[mm]{Marco Marletta}\ead{Marco.Marletta@cs.cardiff.ac.uk}
\address[lb]{Department of Mathematics, Heriot-Watt University,
and
Maxwell Institute for Mathematical Sciences,
Riccarton, Edinburgh EH14 4AS, United Kingdom}
\address[ml]{Department of Mathematics and Statistics, Reading University, Whiteknights, PO Box 220, Reading RG6 6AX, United Kingdom}
\address[mm]{Cardiff School of Mathematics, Cardiff University,
and
Wales Institute of Mathematical and Computational Sciences, Senghennydd Road, Cardiff CF24 4AG, United Kingdom}
\begin{document}

\begin{abstract}
We prove that all the eigenvalues of a certain highly non-self-adjoint 
Sturm-Liouville differential operator are real. The results presented
are motivated and extend those recently found by various authors 
(\cite{Benilov}, \cite{Davies} and \cite{Weir2}) 
on the stability of a model describing small oscillations 
of a thin layer of fluid inside a rotating cylinder.
\end{abstract}

\begin{keyword}

Non-Hermitian Hamiltonians \sep  PT-Symmetry \sep non-self-adjoint  \sep Sturm-Liouville operators

\MSC 47E05 \sep 34B24 \sep 76R99

\end{keyword}

\maketitle

\section{The general problem class}\label{section:1}
We consider on the interval $(-\pi,\pi)$ the singular 
non-symmetric differential
equation
\be Lu:=i\von \frac{d}{dx}\left(f(x)\frac{du}{dx}\right) + i \frac{du}{dx} = 
\lambda u, 
\label{eq:1}
\ee
in which $f$ is a $2\pi$-periodic function having the following properties:
\be\label{eq:f_sym}
f(x+\pi) = -f(x)\,,\qquad f(-x) = -f(x)\,; 
\ee
and also 
\be\label{eq:f_pos}
f(x)>0\qquad\text{for }x\in (0,\pi)\,. 
\ee
In particular
it follows that $f(\pi\Zz)=0$. We assume that $f$ is continuous, and differentiable 
except possibly at a finite number of points, the points of non-differentiability 
excluding $\pi\Zz$. We assume that $f'(0) = 2/\pi$ and that $0<\von<\pi$. 

Our interest in \eqref{eq:1} is primarily motivated by \cite{Benilov}
and \cite{Davies}, where
$f(x)=(2/\pi)\sin  x$, and  therefore \eqref{eq:1} takes the form
\be i\tilde\von \frac{d}{dx}\left(\sin x\frac{du}{dx}\right) + i \frac{du}{dx} = 
\lambda u, 
\label{eq:1sin}
\ee
with $0<\tilde\von<2$. Eq. \eqref{eq:1sin} arises in fluid dynamics, and describes small oscillations of a thin layer of fluid inside a rotating cylinder. It has been long conjectured that, despite the fact that \eqref{eq:1sin} is highly non-self-adjoint, all the eigenvalues of \eqref{eq:1sin} are real. 
For a fixed non-zero $\tilde\eps$ the conjecture has been supported by some numerical evidence, see e.g. \cite{ChuPel}, who have also proved that eigenvalues accumulate at infinity along the real line. Davies \cite{Davies} 
established that {  the spectrum is discrete, nonetheless the eigenfunctions do not form a basis. He also obtained a number of very interesting estimates. Recently Weir \cite{Weir,Weir2} proved, using Davies' estimates, that the spectrum of the highly non-selfadjoint problem \eqref{eq:1sin} is indeed purely real and just very recently Davies and Weir \cite{Davies-Weir} studied the asymptotics of the eigenvalues as $\tilde{\epsilon}\to 0$.}

The goal of our paper is to extend Weir's result and to prove that all the eigenvalues 
of \eqref{eq:1} are  real under minimal restrictions on the coefficient $f(x)$. 
In order not to hide the fundamental simplicity of our methods under
a burden of technical details, the proof that there is no essential
spectrum -- in fact, the resolvent operator has bounded kernel -- will
be given in an article currently in preparation.
One fundamental condition is the 
\emph{anti-symmetry} of $f$ with respect to $x$, see \eqref{eq:f_sym}, and also 
some symmetry properties of the whole operator, see below. This makes \eqref{eq:1}
reminiscent  (but not identical) to a wide class of so-called PT-symmetric equations: 
non-self-adjoint problems, which are not similar to self-adjoint, but which 
nevertheless possess purely real spectra due to some obvious and hidden symmetries. 
For examples, surveys, and interesting recent developments see e.g. \cite{BenBoe, DorDunTat1, DorDunTat2, Shin1, Shin2, LanTre, Kr} and references therein.

An additional difficulty in dealing with \eqref{eq:1} (compared to standard PT-symmetric problems) is the presence of 
singularities at $x=-\pi,0,\pi$. On the other hand, it is exactly these singularities 
which allow us to use a strategy of, very roughly speaking, matching 
asymptotic expansions and exact solutions.

In the remainder of this section we shall describe some basic properties
of the equation and its solutions. Section 2 discusses an illustrative
explicit example in which $f$ is piecewise linear, while Section 3 contains
the general results. { Apendices A and B cover
details of some basic asymptotics 
and spectral theoretical properties of \eqref{eq:1}.}

By an elementary Frobenius analysis (see Appendix 
\ref{section:ass}, for a rigorous 
summary of the main asymptotic results) the differential equation (\ref{eq:1}) is seen to
possess, for each $\lambda\in \Cc$, a unique (up to scalar multiples) solution 
in $L^2(-\pi,\pi)$, which we denote by $\phi(x,\lambda)$. { This} solution is continuous at $x=0$, where it is non-vanishing, and may therefore 
be normalized by the condition
\be \phi(0,\lambda) = 1. \label{eq:2} \ee
Any solution linearly independent of $\phi$ will blow up like $x^{-\pi/(2\von)}$ as
$x\rightarrow 0$.
Later we shall assume the more restrictive condition $0<\von<\pi/2$, which
is necessary to ensure that the differential equation has a unique solution
in a weighted $L^2$-space naturally associated with the problem.

Eqn. (\ref{eq:1}) also has singular points at $\pm \pi$; however all solutions
are square integrable at these points (in the unweighted space). In the weighted
space it will turn out that whether these solutions are square integrable or
not depends on how small $\von$ is.

By the {\em periodic eigenvalue problem associated with (\ref{eq:1})} we mean 
the problem of finding solutions which are square integrable and which satisfy
the periodicity condition $u(-\pi) = u(\pi)$. In view of the foregoing comments
about the solution $\phi$ it follows that $\mu$ is an eigenvalue if and only
if 
\be \phi(-\pi,\mu) = \phi(\pi,\mu). \label{eq:3} \ee
Now consider, for any $\lambda\in\Cc$, the function $\psi(x,\lambda) = \phi(-x,\lambda)$. 
It is easy to see that $\psi$ solves (\ref{eq:1}) with $\lambda$ replaced by 
$-\lambda$; also, $\psi$ is square integrable and $\psi(0,\lambda) = 1$, { hence}
$\psi(x,\lambda)=\phi(x,-\lambda)$. In other words,
\be \phi(-x,\lambda) = \phi(x,-\lambda). \label{eq:4} \ee
In a similar way, taking complex conjugates in (\ref{eq:1}) shows that
\be \overline{\phi(x,\lambda)} = \phi(x,-\overline{\lambda}) = \phi(-x,\overline{\lambda}). 
\label{eq:5} 
\ee
Eqn. (\ref{eq:5}) gives a reflection principle for $\phi$ across the imaginary
$\lambda$-axis, on which $\phi$ is real valued.

Using the eigenvalue condition (\ref{eq:3}) and the 
symmetries (\ref{eq:4}) and (\ref{eq:5}),
makes it clear that eigenvalues come in quadruples: $(\lambda,-\lambda,\overline{\lambda},
-\overline{\lambda})$. Also, the condition (\ref{eq:3}) for $\mu$ to be an eigenvalue
can be written in the equivalent form
\be \phi(\pi,\mu) = \phi(\pi,-\mu) = \overline{\phi(\pi,\overline{\mu})}. \label{eq:3p} \ee
We would like to show that all eigenvalues of the problem are real.

Finally in this introduction, we emphasize that all of our results can be placed in a 
proper operator theoretic framework: in particular, the eigenvalues to which we have
casually referred are indeed eigenvalues of a closed non-selfadjoint operator; see
Appendix B. Let $C^2_{\text{per}}(-\pi,\pi)$ be the space of periodic $C^2$ functions in $(-\pi,\pi)$. 
The differential expression $L$ with domain $C^2_{\text{per}}(-\pi,\pi)$
defines a closable linear operator acting on the space $L^2(-\pi,\pi)$,
despite the fact that $1/f(x)$ is not integrable at $x=0, \pm \pi$. 
In order to see this we will show that $L$ is a closed operator
in the maximal domain
\[
      \mathcal{D}_{\text m}:=\{u\in L^2(-\pi,\pi)\,:\, Lu\in L^2(-\pi,\pi)\}.
\]
Clearly $C^2_{\text{per}}(-\pi,\pi)\subset \mathcal{D}_{\text m}$. 
In Appendix~\ref{section:dom} we will show that
$\mathcal{D}_{\text m}\subset H^1(-\pi,\pi)$. 
This ensures that any eigenfunction of $\overline{(L,C^2_{\text{per}}(\pi,\pi))}$ 
is a solution of the periodic eigenvalue problem associated 
with (\ref{eq:1}), and vice versa. Note however that in the general case \eqref{eq:1} we do not claim to prove
  that the spectrum is purely discrete, though we believe this result
  is true -- see Corollary~\ref{lemma:dom1}.

\section{A piecewise linear example}\label{section:2}
As an example we start with the case where $f$ is given by
\be
 f(x) = \left\{ \begin{array}{ll} -2\pi^{-1}(x+\pi) & -\pi \leq x \leq -\pi/2, \\
                                   2\pi^{-1}x & -\pi/2 \leq x \leq \pi/2, \\
                                   2\pi^{-1}(\pi-x) & \pi/2 \leq x \leq \pi. 
                \end{array}\right. 
\label{eq:6} 
\ee
In this case the equation can be solved explicitly in terms of Bessel functions.
For $x\in [-\pi/2,\pi/2]$ we have
\be \phi(x,\lambda) = \Gamma(\nu+1)
(i\nu\lambda x)^{-\nu/2}J_{\nu}(2\sqrt{i\nu\lambda x}) =: \zeta_0(x,\lambda), \label{eq:7} \ee
where $\nu = \pi/(2\epsilon)$. For $x\in [\pi/2,\pi]$ we can write
\be \phi(x,\lambda) = A(\lambda)\zeta_1(x-\pi,\lambda) +  B(\lambda) \zeta_2(x-\pi,\lambda)
\label{eq:8}
\ee
where 
\be \zeta_1(z,\lambda) = (i\nu\lambda z)^{\nu/2}J_{\nu}(2\sqrt{-i\nu\lambda z}), \label{eq:8b}\ee
\be \zeta_2(z,\lambda) = (i\nu\lambda z)^{\nu/2}J_{-\nu}(2\sqrt{-i\nu\lambda z}), \label{eq:8c} \ee
and where $A$ and $B$ must be determined to ensure that $\phi$
is continuous and differentiable at $\pi/2$:
\be
\left(\begin{array}{cc} \zeta_1(-\pi/2,\lambda) & \zeta_2(-\pi/2,\lambda) \\
                        \zeta_1'(-\pi/2,\lambda) & \zeta_2'(-\pi/2,\lambda)
      \end{array}\right)\left(\begin{array}{c} A(\lambda) \\ B(\lambda)
                              \end{array}\right) 
 =  \left(\begin{array}{c} \zeta_0(\pi/2,\lambda) \\ \zeta_0'(\pi/2,\lambda)
                              \end{array}\right). 
\label{eq:match} 
\ee
In view of the fact that $\zeta_1(0,\lambda)=0$
the condition (\ref{eq:3p}) becomes, upon using (\ref{eq:8}), 
\[ B(\mu)\zeta_2(0,\mu) = B(-\mu)\zeta_2(0,-\mu). \]
The properties of the Bessel functions show that 
$\zeta_2(0,\lambda) = i^{\nu}/\Gamma(1-\nu)$ for
all $\lambda$, and so the eigenvalue condition becomes
\[ B(\mu) = B(-\mu). \]
The solution of (\ref{eq:match}) is given by
\be B(\lambda) = \frac{\det\left(\begin{array}{cc} \zeta_1(-\pi/2,\lambda) &\zeta_0(\pi/2,\lambda) \\
                                                   \zeta_1'(-\pi/2,\lambda) & \zeta_0'(\pi/2,\lambda)
      \end{array}\right) }{\det\left(\begin{array}{cc} \zeta_1(-\pi/2,\lambda) & \zeta_2(-\pi/2,\lambda) \\
                        \zeta_1'(-\pi/2,\lambda) & \zeta_2'(-\pi/2,\lambda)
      \end{array}\right) }. \label{eq:10} \ee

Further progress at this stage appears to rest on explicit calculation with Bessel functions. 
Firstly, it may be shown (either by reference to Watson's book on Bessel functions or
by an explicitly solving the first order ODE satisfied by the Wronskian) that
\be \det\left(\begin{array}{cc} \zeta_1(z,\lambda) & \zeta_2(z,\lambda) \\
                        \zeta_1'(z,\lambda) & \zeta_2'(z,\lambda)
      \end{array}\right) = \frac{\sin(\nu\pi)}{\pi}(i\nu\lambda)^\nu z^{\nu-1}. \label{eq:11}
\ee
Secondly, observe that if we define
\[ \chi(x,\lambda) = J_\nu(2\sqrt{i\nu\lambda x}), \]
then 
\[ \zeta_0(x,\lambda) = \Gamma(\nu+1)(i\nu\lambda x)^{-\nu/2} \chi(x,\lambda) = c\lambda^{-\nu/2}x^{-\nu/2}\chi(x,\lambda), \]
\[ \zeta_1(x,\lambda) = (i\nu\lambda x)^{\nu/2}\chi(-x,\lambda) = C\lambda^{\nu/2}x^{\nu/2}\chi(-x,\lambda). \]
An explicit calculation shows that
\[ \zeta_0'(\pi/2,\lambda) = c\lambda^{-\nu/2}(\pi/2)^{-\nu/2}(\chi'(\pi/2,\lambda)-\frac{\nu}{\pi}\chi(\pi/2,\lambda)), \]
\[ \zeta_1'(-\pi/2,\lambda) = C\lambda^{\nu/2}(-\pi/2)^{\nu/2}(-\chi'(\pi/2,\lambda)-\frac{\nu}{\pi}\chi(\pi/2,\lambda)), \]
and hence
\be \det\left(\begin{array}{cc} \zeta_1(-\pi/2,\lambda) &\zeta_0(\pi/2,\lambda) \\
                                                   \zeta_1'(-\pi/2,\lambda) & \zeta_0'(\pi/2,\lambda)
      \end{array}\right) = 2cC(-1)^{\nu/2}\chi(\pi/2,\lambda)\chi'(\pi/2,\lambda). \label{eq:12b} \ee
Since $\chi'(x,\lambda)=\sqrt{i\nu\lambda/x}J_\nu'(2\sqrt{i\nu\lambda x})$, it follows that
for some constant $\tilde{C}$ independent of $\lambda$,
\[ B(\lambda) =
\tilde{C}\lambda^{1/2-\nu}J_\nu'(\sqrt{2i\nu\lambda\pi})J_\nu(\sqrt{2i\nu\lambda\pi}). \]
Hence the eigenvalue condition becomes 
\be F(\lambda) = F(-\lambda), \;\;\;
 \text{where $F(\lambda)=\lambda^{1/2-\nu}J_\nu'(\sqrt{2i\nu\lambda\pi})J_\nu(\sqrt{2i\nu\lambda\pi})$}. \label{eq:12} \ee
Setting $z=\sqrt{2i\nu\lambda\pi}$, we observe that $\sqrt{-2i\nu\lambda\pi}=\pm i z$. 

Define
\be \rho(z) :=  \frac{J_\nu'(z)J_\nu(z)}{J_\nu'(iz)J_\nu(iz)} \label{eq:14}. \ee
Then a necessary condition for eqn. (\ref{eq:12}) to be satisfied becomes
\[
|\rho(z)| = 1.
\] 
\begin{theorem}
For any non-real $\lambda$ (corresponding to $\arg(z) \not\in \{ \pm \pi/4, \pm 3\pi/4\}$), 
the equation $|\rho(z)|=1$ cannot be satisfied. In fact, for 
$-\pi/4<\arg(\pm z)<\pi/4$, 
we have $|\rho(z)|<1$; for $\pi/4<\arg(\pm z)<3\pi/4$, we have $|\rho(z)|>1$.
Consequently all the eigenvalues of \eqref{eq:1}, with $f(x)$ given by \eqref{eq:6}, are real.
\end{theorem}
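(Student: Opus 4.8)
The plan is to analyze the function $\rho(z)$ defined in \eqref{eq:14} directly, exploiting the Bessel-function machinery together with the product and quotient structure. The key object is $g(z):=J_\nu'(z)J_\nu(z)$, so that $\rho(z)=g(z)/g(iz)$, and the goal is to show that the modulus $|g(z)|$ is strictly monotone in a suitable sense as one rotates $z$ through the relevant sectors, so that $|g(z)|\ne|g(iz)|$ unless $\arg z$ lands exactly on a bisector $\pm\pi/4,\pm3\pi/4$. First I would reduce to a statement about a \emph{single} real-variable function: writing $z=re^{i\theta}$, one wants to control $|g(re^{i\theta})|$ as a function of $\theta$ for fixed $r$. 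Since the eigenvalue condition forces $z^2=2i\nu\lambda\pi$ to be purely imaginary exactly when $\lambda$ is real, the two arguments $z$ and $iz$ are reflections of each other across the line $\arg=\pi/4$ (resp.\ $3\pi/4$), so $|\rho(z)|=1$ at such points automatically; away from them one of the two sectors $(-\pi/4,\pi/4)$ and $(\pi/4,3\pi/4)$ contains $\arg(z)$ and the other contains $\arg(iz)$, and the claim $|\rho(z)|<1$ (resp.\ $>1$) is precisely the assertion that $|g|$ is larger on the second sector than on the first at equal radius.

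The technical heart is therefore a monotonicity statement of the form: for fixed $r>0$, the quantity $|J_\nu'(re^{i\theta})J_\nu(re^{i\theta})|$ is strictly increasing in $|\theta|$ on $[0,\pi/2]$ — or, equivalently after the change of variable $z^2\mapsto w$, that $|J_\nu'(\sqrt{w})J_\nu(\sqrt{w})|$, viewed as a function on a ray of fixed $|w|$, is strictly monotone as $\arg w$ moves from the positive real axis (where the argument of the Bessel functions is real, giving real oscillatory behaviour) toward the negative real axis (where the argument becomes pure imaginary, giving modified-Bessel exponential growth). I would establish this by differentiating $\log|g(\sqrt{w})|^2 = \log g(\sqrt w)+\log\overline{g(\sqrt w)}$ with respect to $\arg w$ and showing the derivative has a fixed sign. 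Using the logarithmic derivative $g'/g = J_\nu''/J_\nu' + J_\nu'/J_\nu$ and Bessel's equation to eliminate $J_\nu''$, this derivative reduces to an expression in $J_\nu'/J_\nu$ evaluated at $\sqrt w$; the sign is then controlled by the known fact that $\Im\big(z\,J_\nu'(z)/J_\nu(z)\big)$ and the analogous modified-Bessel quantity do not vanish off the real/imaginary axes — essentially a consequence of the absence of complex zeros of $J_\nu$ in the relevant sector together with a Mittag-Leffler (partial-fraction) expansion of $J_\nu'/J_\nu$ over its (real, positive) zeros.

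Concretely, the steps are: (i) record the reduction to $|\rho(z)|=1$ and the reflection symmetry pairing $z$ with $iz$, noting the automatic equality on the bisectors; (ii) set $G(\theta):=\log|J_\nu'(re^{i\theta})J_\nu(re^{i\theta})|^2$ and compute $G'(\theta)$ in terms of $J_\nu'/J_\nu$ via Bessel's equation; (iii) use the Mittag-Leffler expansion $J_\nu'(z)/J_\nu(z) = \nu/z - \sum_k 2z/(j_{\nu,k}^2 - z^2)$, where $j_{\nu,k}$ are the positive zeros, to pin down the sign of $\Im\!\big(e^{2i\theta}\,(\cdots)\big)$ on each sector; (iv) conclude strict monotonicity of $G$ on $(0,\pi/2)$ with the sign flip occurring at $\theta=\pi/4$, hence $|\rho(z)|<1$ for $|\arg(\pm z)|<\pi/4$ and $|\rho(z)|>1$ for $\pi/4<\arg(\pm z)<3\pi/4$; (v) invoke the quadruple symmetry $(\lambda,-\lambda,\bar\lambda,-\bar\lambda)$ and \eqref{eq:12} to convert this into the statement that no non-real $\lambda$ can be an eigenvalue, so by the discreteness and reality structure already established the spectrum of \eqref{eq:1} with $f$ as in \eqref{eq:6} is real.

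The main obstacle I anticipate is step (iii)–(iv): proving that the imaginary part of the relevant combination of $J_\nu'/J_\nu$ truly has a fixed sign across an entire open sector of width $\pi/2$, uniformly in $r$. The partial-fraction series converges but is not obviously sign-definite term by term after multiplying by $e^{2i\theta}$; one likely needs to pair it with the product $J_\nu' J_\nu$ (rather than the ratio) so that the zeros of $J_\nu$ cancel against a zero of the numerator, or to use the integral representation / Nicholson-type formula for $|J_\nu(z)|^2$ to get an explicitly positive integrand whose $\theta$-derivative is then manifestly signed. Getting this sign argument clean — and making it hold up to the endpoints $\theta\to 0^+$ and $\theta\to\pi/2^-$ where the two Bessel regimes degenerate — is where the real work lies; the rest is bookkeeping with the symmetries already set up in Section~\ref{section:1}.
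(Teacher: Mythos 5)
Your approach is genuinely different from the paper's, and the difference matters. The paper's proof is a three-line maximum-modulus argument: (i) since the zeros of $J_\nu$ and $J_\nu'$ are real for real $\nu$, $\rho$ is analytic and nonvanishing on the open sector $-\pi/4<\arg z<\pi/4$; (ii) $|\rho|=1$ on the two bounding rays and $|\rho|<1$ as $|z|\to\infty$ inside the sector, by standard Bessel asymptotics; (iii) $\rho$ has growth order $1<2$, so the Phragm\'en--Lindel\"of principle on a sector of opening $\pi/2$ gives $|\rho(z)|<1$ strictly in the interior, and the other sectors follow from $|\rho(-it)|=1/|\rho(t)|$. This sidesteps entirely the technical heart of your proposal: there is no need to differentiate $\log|g|$ in $\theta$, no Mittag--Leffler expansion of $J_\nu'/J_\nu$, no Nicholson-type integral representation, and no delicate sign analysis across a sector of width $\pi/2$ --- one only needs boundary values, a growth order, and analyticity.

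Your reduction of the claim to a pointwise monotonicity statement for $|J_\nu'(re^{i\theta})J_\nu(re^{i\theta})|$ in $\theta\in[0,\pi/2]$ is correct in its setup (the symmetry $|g(re^{i(\pi-\phi)})|=|g(re^{i\phi})|$, coming from $g(\bar z)=\overline{g(z)}$ and $|g(ze^{i\pi})|=|g(z)|$ for real $\nu$, does pair $z$ with $iz$ across the bisector and does give $|\rho|=1$ there). But two cautions are in order. First, the monotonicity you propose to prove is strictly stronger than the theorem's conclusion --- the theorem only needs the inequality $|g(z)|<|g(iz)|$, not monotonicity of $|g|$ along every circle of fixed radius --- and it is not obviously true near $r=0$, where $g(z)\sim Cz^{2\nu-1}$ so $|g|$ is $\theta$-independent to leading order and the sign of $\partial_\theta\log|g|$ depends on sub-leading terms. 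Second, and more importantly, steps (iii)--(iv) are the entire proof, and you flag them yourself as ``where the real work lies''; as written the proposal is a plausible program rather than a proof. The Phragm\'en--Lindel\"of route is the economical way to avoid exactly the sign-definiteness problem you anticipate: rather than proving a fixed sign of $\Im\bigl(e^{2i\theta}J_\nu'/J_\nu\bigr)$ uniformly in $r$ over an open sector, it transfers all the work to the two boundary rays and the behaviour at infinity, where the Bessel asymptotics are explicit. If you wish to pursue your route, note that it would give more (strict monotonicity, hence a quantitative separation of $|\rho|$ from $1$ in the interior), but you would first need to verify the monotonicity at small $r$, and the Mittag--Leffler series $J_\nu'/J_\nu=\nu/z-\sum_k 2z/(j_{\nu,k}^2-z^2)$ is indeed not term-by-term sign-definite after multiplying by $e^{2i\theta}$, so the pairing-with-the-product or Nicholson-formula device you mention would have to be carried out in full.
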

\begin{proof}
For real $\nu$, the zeros of the Bessel function $J_\nu$ and of its derivative
are all real. In the sector $-\pi/4 < \arg(z) < \pi/4$, if $z\neq 0$ then
$iz$ is not real and so { $J_\nu'(iz)J_\nu(iz)$} is nonzero. Consequently $\rho(z)$
is well defined and analytic. Standard asymptotics show that for large $|z|$ in 
this sector, $|\rho(z)|<1$. On the rays bounding the sector, $|\rho(z)|=1$. By 
the Phragmen--Lindel\"{o}f principle, since $\rho$ is a non-constant function which is 
analytic in a sector of angle $\pi/2$, and has a growth order strictly less than two (in fact, growth order 1), $\rho$ attains its maximum modulus strictly on the boundary rays. Consequently in the sector { $-\pi/4 < \arg(z) < \pi/4$}, one has
$|\rho(z)|<1$.

The results for the other sectors follow similarly; for instance, if
$-3\pi/4 < \arg(z) < -\pi/4$ then $z=-it$ with 
$-\pi/4 < \arg(t) < \pi/4$, and so $|\rho(z)| = 1/|\rho(t)|$. 
\end{proof}

\section{Results for the general case}
For the general case we return to our equation (\ref{eq:3p}). 
Throughout this section we assume $0<\von<\pi/2$.

We start with an auxiliary result concerning the location of the zeros
of $\phi(\pi,\lambda)$. 

\begin{theorem}\label{zeros}
All the zeros of $\phi(\pi,\lambda)$ lie on the negative imaginary axis.
\end{theorem}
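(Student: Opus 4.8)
The plan is to reduce equation \eqref{eq:1}, restricted to the half-interval $(0,\pi)$, to a genuine (singular) Sturm--Liouville eigenvalue problem with positive coefficient and positive weight, for which reality and positivity of the spectral parameter come for free. Since $f>0$ on $(0,\pi)$ by \eqref{eq:f_pos}, fix a base point $x_0\in(0,\pi)$ and put
\be
 F(x):=\int_{x_0}^x\frac{dt}{f(t)},\qquad \rho(x):=e^{F(x)/\von},\qquad r(x):=f(x)\rho(x),
\label{eq:SLcoeffs}
\ee
all positive on $(0,\pi)$ (with $r,\rho\to0$ as $x\to0^+$ and $r,\rho\to\infty$ as $x\to\pi^-$, since $1/f$ is non-integrable at both ends). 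Using $\rho'=\rho/(\von f)$ one checks that $(ru')'=\rho\bigl[(fu')'+\tfrac1\von u'\bigr]$, so any solution $u$ of \eqref{eq:1} on $(0,\pi)$ satisfies
\be
 -(ru')' \;=\; \mu\,\rho\,u \qquad\text{on }(0,\pi),\qquad\text{where }\mu:=\frac{i\lambda}{\von}.
\label{eq:SLform}
\ee
Because $\von>0$ is real, it suffices to show that whenever $\phi(\pi,\lambda)=0$ the number $\mu$ is real and strictly positive; then $\lambda=-i\von\mu$ lies on the negative imaginary axis.

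So assume $\phi(\pi,\lambda)=0$. Multiplying \eqref{eq:SLform} (with $u=\phi(\cdot,\lambda)$) by $\overline{\phi}$ and integrating over $(0,\pi)$, an integration by parts gives
\be
 \int_0^\pi r\,|\phi'|^2\,dx \;-\;\bigl[\,r\,\phi'\,\overline{\phi}\,\bigr]_0^\pi \;=\; \mu\int_0^\pi \rho\,|\phi|^2\,dx .
\label{eq:IBP}
\ee
The crux is to justify that the boundary term vanishes and the two integrals converge; for this I would appeal to the Frobenius asymptotics of Appendix~\ref{section:ass}. At $x=0$ the indicial exponents of \eqref{eq:1} are $0$ and $-\nu$, with $\nu=\pi/(2\von)>1$; since $\phi(0,\lambda)=1$, the solution $\phi$ is the exponent-$0$ (analytic) branch, so $\phi$ and $\phi'$ stay bounded near $0$ while $r\to0$, whence $r\phi'\overline{\phi}\to0$ as $x\to0^+$ and $r|\phi'|^2$, $\rho|\phi|^2$ are integrable near $0$. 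At $x=\pi$ the indicial exponents are $0$ and $\nu$, and $\phi(\pi,\lambda)$ is precisely the coefficient of the exponent-$0$ branch; hence $\phi(\pi,\lambda)=0$ forces $\phi(x,\lambda)\sim c(\pi-x)^{\nu}$ as $x\to\pi^-$, with $c\neq0$ (otherwise $\phi\equiv0$, contradicting $\phi(0,\lambda)=1$). Since $r=O\bigl((\pi-x)^{1-\nu}\bigr)$ near $\pi$ and $\phi'=O\bigl((\pi-x)^{\nu-1}\bigr)$, we get $r\phi'\overline{\phi}=O\bigl((\pi-x)^{\nu}\bigr)\to0$, and $r|\phi'|^2$, $\rho|\phi|^2$ are integrable near $\pi$. (If $\nu\in\Nn$ the exponent-$0$ branch may carry a correction of order $(\pi-x)^\nu\log(\pi-x)$, which does not affect any of these conclusions.)

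With the boundary term removed, \eqref{eq:IBP} becomes
\be
 \mu \;=\; \frac{\displaystyle\int_0^\pi r\,|\phi'|^2\,dx}{\displaystyle\int_0^\pi \rho\,|\phi|^2\,dx}.
\label{eq:mu-ratio}
\ee
The denominator is a strictly positive real number, and the numerator a non-negative real number; it is in fact strictly positive, since $\phi$ cannot be constant (a constant $\phi$ would give $\phi(\pi,\lambda)=\phi(0,\lambda)=1\neq0$), so $\phi'\not\equiv0$. Therefore $\mu>0$ and $\lambda=-i\von\mu$, i.e. every zero of $\phi(\pi,\cdot)$ lies on the negative imaginary axis. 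I expect the genuinely delicate step to be the endpoint analysis at $x=\pi$ — identifying $\phi(\pi,\lambda)=0$ with $\phi$ being the ``small'' solution $\sim(\pi-x)^\nu$ there, and thereby controlling both the boundary term in \eqref{eq:IBP} and the convergence of the weighted Dirichlet integral — rather than the purely algebraic reduction to \eqref{eq:SLform}.
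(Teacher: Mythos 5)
Your proposal is correct, and it rests on the same fundamental reduction as the paper's proof: you multiply through by the integrating factor $e^{F/\von}$ (your $r$ and $\rho$ are precisely the paper's $p$ and $w=p/f$), bringing \eqref{eq:1} on $(0,\pi)$ into the Sturm--Liouville form $-(pu')'=\ell w u$ with $p,w>0$ and $\ell=i\lambda/\von$, and then exploit the positivity of the coefficients to pin $\ell$ to the positive real axis.

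Where you diverge from the paper is in \emph{how} reality and positivity of $\ell$ are extracted. The paper argues abstractly: it uses the Frobenius exponents at $0$ and $\pi$ to show the singular problem is limit-point at both endpoints in $L^2_w$ when $0<\von<\pi/2$, hence essentially self-adjoint with no boundary conditions needed, so the eigenvalues (which are exactly the zeros of $\phi(\pi,\cdot)$) are real, and separately that they are positive. You instead carry out the Green's identity directly for the specific solution $\phi$ under the hypothesis $\phi(\pi,\lambda)=0$: you verify, from the same Frobenius asymptotics, that the boundary term $r\,\phi'\,\overline{\phi}$ vanishes at both ends and that the two weighted Dirichlet integrals converge, and then read off $\mu=i\lambda/\von$ as a ratio of positive quantities. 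This is a more elementary and self-contained route; it makes explicit the computation that is otherwise hidden inside the appeal to Weyl theory. It also buys a small bonus: your endpoint estimates (the boundary term goes like $(\pi-x)^\nu$, the integrands like $(\pi-x)^{\nu-1}$ and $(\pi-x)^\nu$) only require $\nu=\pi/(2\von)>0$, so your argument covers the full range $0<\von<\pi$ in one stroke, whereas the paper establishes $0<\von<\pi/2$ by the limit-point mechanism and then disposes of $\pi/2\leq\von<\pi$ separately in a remark about the Friedrichs boundary condition in the limit-circle regime. One very minor point: you state $\nu>1$, which is the condition $\von<\pi/2$ in force in that section, but your own estimates never actually need $\nu>1$, only $\nu>0$.
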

\begin{proof}
Consider the differential equation
(\ref{eq:1}) satisfied by $\phi(x,\lambda)$, restricting attention to
the sub-interval $(0,\pi)$. Note that $f$ does not change sign in this 
sub-interval. Using standard integrating factor
techniques we can transform (\ref{eq:1}) { into} 
\be \frac{d}{dx}\left(p(x)\frac{du}{dx}\right) = -\frac{i\lambda}{\von}\frac{p}{f} u, 
\label{eq:1p}
\ee
where 
\be p'/p = f'/f + 1/(\von f). \label{eq:pdef} \ee

Simple asymptotic analysis shows that
\be p(x) \sim \left\{ \begin{array}{ll} x^{\pi/(2\von)}f(x) & \mbox{($x$ near zero);} \\
                                       (\pi-x)^{-\pi/(2\von)}f(x) & \mbox{($x$ near $\pi$).}
                     \end{array}\right.
\label{eq:pass}
\ee
Setting $w(x) = p(x)/f(x)$ and $\ell = i\lambda/\von$, we can write
(\ref{eq:1p}) in Sturm-Liouville form as
\be -(p(x)u')' = \ell w(x) u, \;\;\; x\in (0,\pi). \label{eq:1pp} \ee
Observe that the weight function $w$, which is positive, has very different behaviours
at different endpoints:
\[ w(x) \sim \left\{ \begin{array}{ll} x^{\pi/(2\von)} & \mbox{($x$ near zero);} \\
                                       (\pi-x)^{-\pi/(2\von)} & \mbox{($x$ near $\pi$).}
                     \end{array}\right.
\] 
By Frobenius analysis, there exist solutions of (\ref{eq:1}) with behaviours
$u(x)\sim x^{-\pi/(2\von)}$ and $u(x)\sim 1$ near $x=0$, and behaviours $u(x)\sim 1$ 
and $u(x)\sim (\pi-x)^{\pi/(2\von)}$ near $x=\pi$. It is therefore easy to check that 
for $0<\von<\pi/2$ there is always precisely one solution in $L^2_w$ near the origin, 
one solution in $L^2_w$ near $x=\pi$. The problem (\ref{eq:1pp}) is therefore of limit-point type { at the two ends of the interval $(0,\pi)$}.

Since { $p>0$ and $w>0$} on $(0,\pi)$, (\ref{eq:1pp}) is automatically 
selfadjoint in $L^2_w$ without the need for boundary conditions.
The eigenvalues $\ell$, which must be real, will be precisely those values of 
$\ell$ for which $\phi(x,\lambda)$, which is always in $L^2_w$ near the origin,
is in $L^2_w$ near $x=\pi$.  This only happens when $\phi(x,\lambda)\sim (\pi-x)^{\pi/(2\von)}$, that is, when $\phi(\pi,\lambda)=0$. 
Since $\ell = i\lambda/\von$, this establishes that
the zeros of $\phi(\pi,\lambda)$ lie on the imaginary axis in the $\lambda$-plane.
Since the eigenvalues of (\ref{eq:1pp}) are all strictly
positive, we can further say that the zeros of $\phi(\pi,\lambda)$ all lie on the
negative imaginary $\lambda$-axis.
\end{proof}

\begin{remark}
This result, which we have proved for $0<\von<\pi/2$, is also true for
$0<\von<\pi$. In the case $\pi/2\leq \von < \pi$ the problem is limit circle
in $L^2_w$ at $x=\pi$ but the condition $\phi(\pi,\lambda)=0$ arises naturally
by imposing $u(\pi)=0$ in (\ref{eq:1pp}), which is the Friedrichs boundary
condition for this case.
\end{remark}

We now make the transformation $\lambda = iz^2$ and define
\be g(z) = \phi(\pi,-iz^2). \label{eq:gdef} \ee
The eigenvalue condition $\phi(\pi,\lambda)=\phi(\pi,-\lambda)$ thus becomes
\be g(z) = g(iz). \label{eq:evc} 
\ee
In the $\lambda$-plane, by Theorem \ref{zeros}, all the zeros of 
$\phi(\pi,\lambda)$ are of the form $\lambda = -ir$, $r>0$; hence in the $z$-plane
all the zeros of $g$ lie on the imaginary $z$-axis and occur in pairs
$\pm i\alpha_n$, $n\in \Nn$, $0<\alpha_1\le\alpha_2\le\dots$ (observe that $g(0)\neq 0$). When $z$ is not one of these
zeros the function
\be \rho(z) = \frac{g(iz)}{g(z)} \label{eq:rhodef} \ee
is well defined and analytic. The eigenvalue condition (\ref{eq:evc}) becomes
\be \rho(z) = 1. \label{eq:evcp} \ee
\begin{theorem}
\label{maxp}
Suppose that the solution $\phi(x,\lambda)$ is, for each $x\in [0,\pi]$, an
analytic function of $\lambda$ with growth order at most 1/2. Then
for $-\pi/4 < \arg(\pm z) < \pi/4$, $|\rho(z)|<1$; for $\pi/4 < \arg(\pm z)
 < 3\pi/4$,  { $|\rho(z)|>1$}. Consequently, eigenvalues of \eqref{eq:1} can only lie on the lines
$\arg(z) \equiv \pi/4 \; ({\rm mod} \; \pi/2)$, which, in terms of $\lambda = iz^2$,
means that all eigenvalues are real.
\end{theorem}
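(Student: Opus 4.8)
The plan is to carry over the Phragm\'en--Lindel\"of argument of the piecewise linear case almost verbatim, with the reflection principle \eqref{eq:5} now playing the role that the reality of the zeros of $J_\nu$ and $J_\nu'$ played there. Everything is organised around the function $\rho$ of \eqref{eq:rhodef} and the eigenvalue condition \eqref{eq:evcp}. The first step is to record the structure of $g$ from \eqref{eq:gdef}: it is an \emph{even} entire function with $g(0)\neq 0$, and by Theorem~\ref{zeros} all of its zeros lie on the imaginary $z$-axis, in pairs $\pm i\alpha_n$. Evenness gives $\rho(-z)=\rho(z)$, so it suffices to work in the sector $S:=\{\,z:\,-\pi/4<\arg z<\pi/4\,\}$; moreover the zeros of $g$ miss the closed sector $\overline S$, so $\rho$ is analytic on $\overline S$ (the zeros of $z\mapsto g(iz)$, which sit on the real axis, give zeros but not poles of $\rho$ inside $S$, and the vertex is harmless since $g(0)\neq 0$). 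Granting $|\rho|<1$ on $S$, evenness then gives $|\rho|<1$ on $-S$ as well, and the substitution $z=it$ gives $\rho(it)=g(-t)/g(it)=g(t)/g(it)=1/\rho(t)$, whence $|\rho|>1$ on the sectors $\pi/4<\arg(\pm z)<3\pi/4$; since $\rho(z)=1$ forces $|\rho(z)|=1$, the eigenvalue condition \eqref{eq:evcp} can only be met on the four rays $\arg z\equiv\pi/4\ (\mathrm{mod}\ \pi/2)$, on which $\lambda=iz^2$ is real.

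Two ingredients feed Phragm\'en--Lindel\"of. For the boundary equality: on $\arg z=\pm\pi/4$ the parameter $\lambda=iz^2$ is real, and since $(iz)^2=-z^2$ the value $g(iz)$ corresponds to the parameter $-\lambda$; the reflection principle \eqref{eq:5} gives $\overline{\phi(\pi,\lambda)}=\phi(\pi,-\lambda)$ for real $\lambda$, i.e.\ $\overline{g(z)}=g(iz)$ on those rays, so $|\rho|=1$ there --- the exact analogue of ``on the rays bounding the sector, $|\rho(z)|=1$'' in the first theorem. For the growth bound: the hypothesis that $\phi(\pi,\cdot)$ is entire of order at most $1/2$ makes $g$ and $z\mapsto g(iz)$ entire of order at most $1$, so $|g(iz)|\le C\exp(|z|^{1+\eta})$ on $S$ for every $\eta>0$; and writing $g(z)=h(z^2)$ with $h$ entire of order at most $1/2$, the Hadamard factorisation of $h$ carries no exponential factor, giving $g(z)=g(0)\prod_n(1+z^2/\alpha_n^2)$, so that on $\overline S$, where $\Re(z^2)\ge 0$ and hence each factor has modulus at least $1$, one has $|g(z)|\ge|g(0)|>0$. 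Thus $\rho$ has order at most $1<2$ on the sector $S$ of opening $\pi/2$, Phragm\'en--Lindel\"of gives $|\rho|\le 1$ on $S$, and the maximum modulus principle upgrades this to $|\rho|<1$ on the open sector because $\rho$ is non-constant ($\rho(0)=1$ while $\rho(\alpha_1)=0$).

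The step I expect to require the most care is this growth bookkeeping, and within it the \emph{lower} bound $|g(z)|\ge|g(0)|$ on $S$, which is what legitimises the Phragm\'en--Lindel\"of step: it genuinely relies on Theorem~\ref{zeros}, since it is the confinement of all zeros of $\phi(\pi,\cdot)$ to a single ray that forces every zero of $g$ into the half-plane $\Re(z^2)\le 0$ and so keeps $|g|$ bounded below on the complementary sector. The remaining ingredients --- evenness, the symmetries \eqref{eq:4} and \eqref{eq:5}, the change of variable $\lambda=iz^2$ --- are purely formal. I would also note that once $g(z)=g(0)\prod_n(1+z^2/\alpha_n^2)$ is in hand one gets $\rho(z)=\prod_n\frac{1-z^2/\alpha_n^2}{1+z^2/\alpha_n^2}$ directly, and since $|1-z^2/\alpha_n^2|^2-|1+z^2/\alpha_n^2|^2=-4\,\Re(z^2)/\alpha_n^2<0$ when $\Re(z^2)>0$, the inequality $|\rho|<1$ on $S$ follows by an elementary estimate, so that Phragm\'en--Lindel\"of could be dispensed with altogether; routing the argument through Phragm\'en--Lindel\"of mainly serves to keep it parallel to the piecewise linear case. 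Finally, the theorem is conditional: the analyticity and order-$1/2$ growth of $\phi(\pi,\lambda)$ are its real analytic content and must be established separately, presumably from the Frobenius and asymptotic estimates of the appendices.
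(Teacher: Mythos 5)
Your proof is correct and, at its core, rests on exactly the same product representation and elementary factor estimate as the paper's: once $g(z)=g(0)\prod_n(1+z^2/\alpha_n^2)$ is in hand, the observation that $\bigl|1-z^2/\alpha_n^2\bigr|<\bigl|1+z^2/\alpha_n^2\bigr|$ whenever $\Re(z^2)>0$ gives $|\rho(z)|<1$ in the sector and the theorem follows. The one genuinely different step is how you justify that factorisation. The paper constructs it ad hoc, writing $g(z)=\exp(h(z^2))\prod(1+z^2/\alpha_n^2)$ ``if the zeros have the appropriate asymptotic behaviour,'' deduces from the growth order that $h$ is constant, and then \emph{separately} establishes the convergence of the product by a WKB analysis of the self-adjoint problem on $(0,\pi)$, showing the zeros $\alpha_n$ grow like $n$. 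You instead note that $g$ is an even entire function of order at most $1$, so $h(w)=g(\sqrt w)$ is entire of order at most $1/2<1$, and Hadamard's factorisation theorem then delivers both facts at once: genus $0$, no exponential factor, and a convergent product $\prod(1-w/w_n)$, with $w_n=-\alpha_n^2$ by Theorem~\ref{zeros}. This is cleaner and is the standard way to package the argument; the paper's WKB route is longer but yields the sharper asymptotic $\alpha_n\asymp n$ as a by-product, rather than just $\sum\alpha_n^{-2}<\infty$. Your framing of the argument around Phragm\'en--Lindel\"of (with the reflection principle \eqref{eq:5} supplying $|\rho|=1$ on the boundary rays, and the lower bound $|g(z)|\ge|g(0)|$ on the closed sector legitimising the growth hypothesis) is a valid parallel route and matches the style of the piecewise linear Theorem; but, as you yourself observe, once the Hadamard product is available the elementary termwise estimate settles it directly, which is precisely what the paper does for the general case. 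In short: same proof, with Hadamard's theorem doing the work of the paper's explicit zero-counting.
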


\begin{proof}
We start by outlining the idea of the proof; after the outline, we fill
in a technical detail.

Firstly we know that $g(0)\neq 0$, that $g$ is an analytic function of $z^2$ 
and that the zeros of $g$ occur in pairs $\pm i\alpha_n$. If these zeros have 
the appropriate asymptotic behaviour then we can write
\be g(z) = \exp(h(z^2))\prod_{n=1}^\infty\left(1+\frac{z^2}{\alpha_n^2}\right) 
\label{eq:gprod}
\ee
where $h$ is an analytic function and the infinite product is convergent.
Since we also know that $g$ has exponential growth order at most 1, it
follows that $h$ must be constant, and hence
\[ \rho(z) = \frac{g(iz)}{g(z)}
 = \prod_{n=1}^\infty\left(\frac{1-\frac{z^2}{\alpha_n^2}}{1+\frac{z^2}{\alpha_n^2}}\right).
\]
It is then a simple matter to check that for each $n\in \Nn$,
\[ \left|\frac{1-\frac{z^2}{\alpha_n^2}}{1+\frac{z^2}{\alpha_n^2}}\right|
 < 1 \qquad \forall z \text{ such that } -\pi/4 < \arg(z) < \pi/4, \]
and the result is immediate.

It only remains to check that the zeros of $g$ grow sufficiently rapidly to ensure
convergence of the infinite product in (\ref{eq:gprod}). To this end we
recall from the proof of Theorem \ref{zeros} that the zeros of $\phi(\pi,\lambda)$
are given by the eigenvalues of the ODE (\ref{eq:1p}) on $(0,\pi)$ with Friedrichs 
boundary conditions. We therefore wish to examine the large eigenvalues of this problem. 
To this end we make the transformation
\be \phi(x,\lambda) = \frac{1}{\sqrt{p(x)}}\psi(x,\lambda). \label{eq:psidef} \ee
where $p$ is given by (\ref{eq:pdef}). This reduces the differential equation
to
\be -\psi'' + Q(x)\psi = \mu\frac{1}{f}\psi, \label{eq:psieq} \ee
where $Q = p^{-1/2}(p^{-1/2}p')'/2$ and where $\mu = i\von^{-1}\lambda$ is already
known to be real and positive. A simple calculation based on the known
asymptotics of $p$ near $x=0$ and $x=\pi$ shows that
\[ Q(x) \sim \left\{
 \begin{array}{ll}
  \frac{1}{4}\left(\frac{\pi}{2\von}+1\right)\left(\frac{\pi}{2\von}-1\right)x^{-2}, & 
 x \searrow 0, \\
\frac{1}{4}\left(\frac{\pi}{2\von}+1\right)\left(\frac{\pi}{2\von}-1\right)(\pi-x)^{-2}, & 
 x \nearrow \pi. \end{array} \right. 
  \]
Since $0<\von<\pi/2$, we have $Q(x)\rightarrow +\infty$ for $x\searrow 0$ and
$x\nearrow \pi$. By Sturm Comparison we can therefore obtain lower bounds on 
the eigenvalues $\mu$ by discarding $Q$ and considering the eigenvalues $\mu$ { 
of the problem
\[ -\psi''-\tilde{Q}\psi = \mu\frac{1}{f(x)}\psi, \;\;\; x\in (0,\pi), \]
where $\tilde{Q}>0$ is a sufficiently large constant.}
Standard WKB estimates (see, e.g., Bender and Orszag \cite{Bender})
give for the $n$th eigenvalue the formula
\[ \int_0^\pi \sqrt{\frac{\mu_n}{f(x)}}dx \sim n, \]
where we observe that the integral converges because $f$ has simple zeros at the endpoints
of the interval. Hence
\[ \mu_n = O(n^2). \]
Recalling the transformations $\lambda = iz^2$ and $\lambda = i\von^{-1}\mu$ and
the definition of the zeros $\alpha_n$ of $g$ and their correspondence with the
eigenvalues of this problem, we must have
\[ \lambda_n = i\alpha_n^2 = i\von^{-1}\mu_n, \]
whence
\[ \alpha_n = O(n). \]
This is sufficient to ensure convergence of the product in (\ref{eq:gprod}).
\end{proof}

\begin{remark} \label{remark:growthorder1}
In the case of the piecewise linear coefficient example of eqn. (\ref{eq:6}),
$\phi(x,\lambda)$ has growth order $1/2$ in $\lambda$ and hence
$\phi(x,-iz^2)$ has growth order 1 in $z$, for any $x \in [0,\pi]$.
This follows from the explicit expressions for the solutions given in
terms of Bessel functions in Section \ref{section:2}.

The same conclusion also holds for the case $f(x) = (2/\pi)\sin(x)$ considered
by Davies \cite{Davies}: the equation admits a change of variables $t=\tan(x/2)$, and the solutions are then given in this case in terms
of Heun G-functions on the interval $[0, +\infty)$. For relevant asymptotic results, see e.g. 
\cite{Fedoryuk}.

In both of these cases, therefore, the eigenvalues of the corresponding
PT-symmetric eigenvalue problem are all real.
\end{remark}

The following lemma widens the class of functions $f$ for which the 
function $\phi(x,\lambda)$ has growth order $1/2$ in $\lambda$ for
all $x\in [0,\pi]$.

\begin{lemma}\label{lemma:growthorder2}
Suppose that $f$ falls within the class introduced in Section
\ref{section:1} and that $f$ is linear in arbitrarily small
neighbourhoods of $x=0$, $x=\pi$. Then $\phi(x,\lambda)$ has
growth order $1/2$ in $\lambda$ for all $x\in [0,\pi]$.
\end{lemma}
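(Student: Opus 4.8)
The plan is to prove Lemma~\ref{lemma:growthorder2} by localizing the problem near the singular endpoints and using the explicit Bessel description there, then propagating the growth estimate across the regular middle part of the interval by a Gronwall-type bound. Specifically, suppose $f$ is linear on $[0,\delta]$ and on $[\pi-\delta,\pi]$ for some small $\delta>0$. On $[0,\delta]$ the equation \eqref{eq:1} becomes, after rescaling, exactly the Bessel-type equation of Section~\ref{section:2}, so the normalized solution $\phi(x,\lambda)$ coincides (up to the known normalization $\Gamma(\nu+1)(i\nu\lambda x)^{-\nu/2}$) with $J_\nu(2\sqrt{i\nu\lambda x})$, whose growth in $\lambda$ is governed by the asymptotics of $J_\nu$: for large argument $J_\nu(w)\sim\sqrt{2/(\pi w)}\cos(\cdots)$, so $|J_\nu(2\sqrt{i\nu\lambda\delta})|$ grows like $\exp(C|\lambda|^{1/2})$. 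Hence $\phi(\delta,\lambda)$ and $\phi'(\delta,\lambda)$ (in a suitable normalization, e.g. weighted by $f$) are entire in $\lambda$ of growth order exactly $1/2$.

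Next I would control $\phi$ on the compact regular interval $[\delta,\pi-\delta]$. There $f$ is continuous, positive and bounded away from zero, so \eqref{eq:1} is a regular first-order-in-$u'$ linear ODE with coefficients depending on $\lambda$ only through the term $\lambda u$. Writing the equation as a first-order system for $(u,f u')$ (or $(u,pu')$), the coefficient matrix is affine in $\lambda$ with bounded entries on $[\delta,\pi-\delta]$, so the standard estimate for linear systems gives $\|(\,u,fu'\,)(x,\lambda)\|\le \|(\,u,fu'\,)(\delta,\lambda)\|\exp\!\big(C(1+|\lambda|)(x-\delta)\big)$. This only yields growth order $1$, not $1/2$, so a naive system bound is too crude; the standard fix is to rescale the independent variable or to diagonalize the leading $\lambda$-dependence. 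Concretely, on the regular interval the two Frobenius/WKB solutions behave like $\exp(\pm\,c\int\sqrt{\lambda/f}\,)$ up to algebraically-bounded prefactors, i.e.\ growth order $1/2$ in $\lambda$; I would make this rigorous by the substitution that turns \eqref{eq:1} into Schr\"odinger form (as in \eqref{eq:psieq}) and applying the classical result that solutions of $-\psi''+Q\psi=\mu\,\psi/f$ with $Q$ bounded and $1/f$ bounded on a compact interval are, for each fixed $x$, entire in $\mu$ of order $1/2$ (this is the Pal\'ey--Wiener/transmutation estimate for the regular Sturm--Liouville problem). Matching at $x=\delta$ then shows $\phi(x,\lambda)$ has order $\le 1/2$ for all $x\in[\delta,\pi-\delta]$.

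Finally I would treat the endpoint neighbourhood $[\pi-\delta,\pi]$ symmetrically: there $f$ is again linear, so $\phi$ is a linear combination $A(\lambda)\zeta_1(x-\pi,\lambda)+B(\lambda)\zeta_2(x-\pi,\lambda)$ with $\zeta_1,\zeta_2$ the Bessel expressions \eqref{eq:8b}--\eqref{eq:8c}; the connection coefficients $A(\lambda),B(\lambda)$ are obtained by solving the $2\times2$ matching system at $x=\pi-\delta$, whose entries are the already-established order-$1/2$ quantities $\zeta_j(\delta,\lambda)$, $\zeta_j'(\delta,\lambda)$ together with $\phi(\pi-\delta,\lambda)$, $\phi'(\pi-\delta,\lambda)$, and whose determinant is the explicit Wronskian \eqref{eq:11}, a pure power of $\lambda$ with no zeros. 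Hence $A$ and $B$ are entire of order $\le 1/2$, and since $\zeta_1,\zeta_2$ themselves have order $1/2$ in $\lambda$ (same Bessel asymptotics as above), so does $\phi(\pi,\lambda)=g$ after the substitution $\lambda=-iz^2$.

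The main obstacle is the middle step: getting growth order exactly $1/2$ rather than $1$ on the regular interval $[\delta,\pi-\delta]$. A direct Gronwall bound on the first-order system loses a square root because the $\lambda u$ term sits in an off-diagonal position and its ``size'' in the relevant norm is $\sqrt{|\lambda|}$, not $|\lambda|$ — one must either work in Schr\"odinger/Liouville normal form and invoke the classical order-$1/2$ entirety of solutions of regular Sturm--Liouville equations as entire functions of the spectral parameter, or perform a WKB-type diagonalization that exhibits the $\exp(\pm c\sqrt{\lambda})$ behaviour explicitly. Once that sharp interior estimate is in hand, the endpoint pieces are routine Bessel asymptotics and the matching is linear algebra with an explicitly nonvanishing determinant, so everything fits together and Theorem~\ref{maxp} applies to give reality of all eigenvalues for this wider class of~$f$.
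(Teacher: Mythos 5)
Your overall plan is exactly the paper's: compute $\phi$ explicitly in Bessel functions on $[0,\delta]$, propagate across the regular interval $[\delta,\pi-\delta]$ by a system/Gronwall estimate, and then determine the connection coefficients $A(\lambda),B(\lambda)$ at $\pi-\delta$ from a $2\times 2$ linear system with explicitly nonvanishing determinant. The difference is in how the middle interval is handled. You correctly diagnose that the naive first-order system in $(u,fu')$ or $(u,pu')$ has an affine-in-$\lambda$ coefficient matrix and so Picard/Gronwall only gives order $1$, and you call this ``the main obstacle,'' proposing to escape it by passing to Schr\"odinger/Liouville normal form and invoking a classical order-$1/2$ entirety result for regular Sturm--Liouville problems, or by an unspecified WKB diagonalization. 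That would work, but the paper's fix is a one-line rescaling of the dependent variable that you came close to but did not write down: take
\[
Y(x)=\begin{pmatrix}\phi(x,\lambda)\\ \lambda^{-1/2}f(x)\phi'(x,\lambda)\end{pmatrix},
\qquad
Y'=\begin{pmatrix}0 & \lambda^{1/2}/f(x)\\ -i\lambda^{1/2}/\von & -1/(\von f(x))\end{pmatrix}Y,
\]
so that on $[\delta,\pi-\delta]$, where $f$ is bounded away from $0$, the coefficient matrix satisfies $\|A(x,\lambda)\|\le C_\delta|\lambda|^{1/2}$ and the standard Picard estimate directly yields $\|Y(x,\lambda)\|\le \|Y(\delta,\lambda)\|\exp\bigl(C_\delta(x-\delta)|\lambda|^{1/2}\bigr)$, i.e.\ growth order $1/2$ with no appeal to transmutation or external Sturm--Liouville theory. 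This is worth noticing because it keeps the lemma self-contained and elementary, whereas your route imports a nontrivial black box and still needs to verify that the Liouville transform does not disturb the growth order (it does not, since $p$ is $\lambda$-independent and bounded away from $0$ on $[\delta,\pi-\delta]$, but this requires a remark). The endpoint matching you describe is essentially the paper's; the only caveat is that the Wronskian in \eqref{eq:11} depends on $\lambda$ through a factor $(i\nu\lambda)^\nu$ (order $0$, so harmless), while the paper normalizes the second row of the matching matrix by $p(\delta)$ so that the determinant is a $\delta$-independent constant computed from the $\delta\to 0$ asymptotics of $\zeta_1,\zeta_2$. Either normalization is fine for the growth-order conclusion. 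In short: same architecture, a valid but heavier route through the middle, and a slightly looser treatment of the matching determinant.
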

\begin{proof}
Suppose that $f(x) = 2x/\pi$ for $x\in [0,\delta]$ and that
$f(x) = 2(\pi-x)/\pi$ for $x\in [\pi-\delta,\pi]$ for some
$\delta>0$. Then the explicit expression for $\phi$
given in (\ref{eq:7}) is valid for all $x\in [0,\delta]$ 
and, with standard asymptotics of Bessel functions, establishes
that $\phi(x,\lambda)$ and $\phi'(x,\lambda)$ have exponential
growth order $1/2$ in $\lambda$ for all such $x$. Writing
the differential equation for $\phi$ as
\be \frac{d}{dx}\left(\hspace{-1mm}\begin{array}{c}\phi(x,\lambda) \\
 \lambda^{-1/2}f(x)\phi'(x,\lambda)\end{array}\hspace{-1mm}\right)
 = \left(\hspace{-1mm}\begin{array}{cc} 0 & \lambda^{1/2}/f(x) \\
  -i\lambda^{1/2}/\von & -1/(\von f) \end{array}
 \hspace{-1mm}\right) \left(\hspace{-1mm}\begin{array}{c}\phi(x,\lambda) \\
 \lambda^{-1/2}f(x)\phi'(x,\lambda)\end{array}\hspace{-1mm}\right)
\label{eq:system} \ee
we observe that this is a system of the form $Y' = A(x,\lambda)Y$
in which $\| A(x,\lambda) \| \leq C_{\delta}|\lambda|^{1/2}$
for all $x\in [\delta,\pi-\delta]$. By standard Picard
estimates 
\[ \left\| \left(\begin{array}{c}\phi(x,\lambda) \\
 \lambda^{-1/2}f(x)\phi'(x,\lambda)\end{array}\right)\right\|
 \leq
 \left \| \left(\begin{array}{c}\phi(\delta,\lambda) \\
 \lambda^{-1/2}f(\delta)\phi'(\delta,\lambda)\end{array}\right)\right \|
 \exp(C_\delta(x-\delta)|\lambda|^{1/2}) \]
for all $x\in [\delta,\pi-\delta]$. This establishes that 
the result of our lemma holds for all $x\in [0,\pi-\delta]$.
Finally we must extend the result over $[\pi-\delta,\pi]$. To 
this end we write
\be \phi(x,\lambda) = A(\lambda)\zeta_1(x-\pi,\lambda) 
 + B(\lambda)\zeta_2(x-\pi,\lambda), \;\;\; x\in [\pi-\delta,\pi] 
\label{eq:phiend} \ee
where $\zeta_1$ and $\zeta_2$ are the functions introduced
in eqns. \eqref{eq:8b}, \eqref{eq:8c}, both of which have growth
order $1/2$ in $\lambda$ for all $x$. The coefficients may be
found by solving the system
\be
\left(\begin{array}{cc} \zeta_1(-\delta,\lambda) & \zeta_2(-\delta,\lambda) \\
                        p(\delta)\zeta_1'(-\delta,\lambda) & p(\delta)\zeta_2'(-\delta,\lambda)
      \end{array}\right)\left(\begin{array}{c} A(\lambda) \\ B(\lambda)
                              \end{array}\right) 
 =  \left(\begin{array}{c} \phi(\delta,\lambda) \\ p(\delta)\phi'(\delta,\lambda)
                              \end{array}\right) 
\label{eq:match2} 
\ee
where  $p$ may be any function which is non-zero at
$\delta$ but is most conveniently chosen according to (\ref{eq:pdef}).
{ This ensures that the determinant of the matrix on the left
hand side of (\ref{eq:match2}) is independent of $\delta$ and may be
calculated from its value as $\delta\rightarrow 0$. From the asymptotics
of $\zeta_1$ and $\zeta_2$ it turns out that this value is a non-zero
constant, and so}
\[ \left(\begin{array}{c} A(\lambda) \\ B(\lambda)
                              \end{array}\right) 
 = { c(\lambda)}\left(\begin{array}{cc} p(\delta)\zeta_2'(-\delta,\lambda) & -\zeta_2(-\delta,\lambda) \\
                        -p(\delta)\zeta_1'(-\delta,\lambda) & \zeta_1(-\delta,\lambda)
      \end{array}\right)\left(\begin{array}{c} \phi(\delta,\lambda) \\ p(\delta)\phi'(\delta,\lambda)
                              \end{array}\right) 
\]
Every quantity appearing on the
right hand side has growth order {  at most} $1/2$ in $\lambda$ and so $A(\lambda)$ and 
$B(\lambda)$ have growth order $1/2$ in $\lambda$. Thus from (\ref{eq:phiend})
it follows that $\phi(x,\lambda)$ has growth order $1/2$ for all $x\in [\pi-\delta,
\pi]$.
\end{proof}
\begin{remark}\label{remark:growthorder3}
The ideal way to obtain our final result now would be to show that 
$\phi(x,\lambda)$ always has growth order $1/2$ for the whole class of
coefficients $f$ introduced in Section \ref{section:1}. Unfortunately
this does not appear to be easy, since growth orders are not necessarily
preserved in approximation limits. For instance, $\exp(\lambda^2)$ has
growth order $2$ but can be approximated locally uniformly by Taylor
polynomials, which all have growth order $0$. Nevertheless the following
result is true.
\end{remark}

\begin{theorem}\label{theorem:final}
Let $f$ be a function of the class introduced in Section \ref{section:1}
which additionally has the property that $f''(0)$ and $f''(\pi)$
exist. Then all the eigenvalues of eigenvalue 
problem \eqref{eq:1} are real.
\end{theorem}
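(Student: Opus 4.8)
The plan is to deduce the theorem from Theorem~\ref{maxp} by checking its only hypothesis: that for every $x\in[0,\pi]$ the map $\lambda\mapsto\phi(x,\lambda)$ is entire of growth order at most $1/2$. Entirety in $\lambda$ is automatic from the Picard construction of the solution $\phi$ normalised by \eqref{eq:2}; and once the order bound is known, the canonical product representation of $g$ used in the proof of Theorem~\ref{maxp} follows from it on general grounds (an entire function of order $<1$ has a genus-zero product), so no condition on $f$ in the interior of $(0,\pi)$ beyond those of Section~\ref{section:1} is needed. For the order, recall from the proof of Lemma~\ref{lemma:growthorder2} that on any compact $[\delta,\pi-\delta]\subset(0,\pi)$ a direct Picard estimate for the system \eqref{eq:system}, whose matrix has norm $O(|\lambda|^{1/2})$ there, propagates the order bound from the endpoint $x=\delta$, and that Lemma~\ref{lemma:growthorder2} already gives the full statement when $f$ is exactly linear near $0$ and $\pi$. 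Thus the whole task is to bound the order of $\phi$ and $\phi'$ on the two short intervals $[0,\delta]$ and $[\pi-\delta,\pi]$ adjacent to the boundary singularities, and this is exactly where the hypotheses ``$f''(0)$ and $f''(\pi)$ exist'' enter.

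First I would treat $x$ near $0$. Since $f'(0)=2/\pi$ and $f''(0)$ exists, $f(x)=(2/\pi)x+O(x^2)$ as $x\searrow 0$, so the integrating factor $p$ of \eqref{eq:pdef} obeys $p(x)=c\,x^{1+\nu}(1+O(x))$ and $p(x)/f(x)=c'\,x^{\nu}(1+O(x))$ with $c,c'\neq 0$ and $\nu=\pi/(2\von)$, the $O(x)$ remainders being genuinely bounded rather than merely $o(1)$. The Frobenius solution bounded at $0$ with $\phi(0,\lambda)=1$ is the unique solution of the Volterra equation obtained by integrating \eqref{eq:1p} twice from the origin,
\[
 \phi(x,\lambda)=1-\frac{i\lambda}{\von}\int_0^x\frac1{p(t)}\int_0^t\frac{p(s)}{f(s)}\,\phi(s,\lambda)\,ds\,dt .
\]
Solving by iteration and using the power-law bounds on $1/p$ and $p/f$, each application of the integral operator raises the power of $x$ by one and supplies a factor $O(|\lambda|/(n(\nu+n)))$ at stage $n$; since $\nu>1$, an easy induction bounds the $n$th term by $(C|\lambda|x)^n/(n!)^2$, whence
\[
 |\phi(x,\lambda)|\le\sum_{n\ge 0}\frac{(C|\lambda|x)^n}{(n!)^2}=I_0\bigl(2\sqrt{C|\lambda|x}\bigr)\le C'\exp\bigl(2\sqrt{C\delta}\,|\lambda|^{1/2}\bigr),\qquad x\in[0,\delta],
\]
which is growth order $1/2$ in $\lambda$; the same representation bounds $\phi'$ similarly. (This is the quantitative shadow of the fact that for exactly linear $f$ the solution is a Bessel function, cf. \eqref{eq:7}.)

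Near $x=\pi$ one needs the extra step already present in the proof of Lemma~\ref{lemma:growthorder2}, because there both Frobenius solutions are square integrable. Using $f(x)=(2/\pi)(\pi-x)+O((\pi-x)^2)$, which is where $f''(\pi)$ is used, I would construct by analogous Volterra iterations the two solutions $u_1(x,\lambda)\sim 1$ and $u_2(x,\lambda)\sim(\pi-x)^{\nu}$ as $x\nearrow\pi$, each of growth order $1/2$ in $\lambda$ together with its derivative, write $\phi=A(\lambda)u_1+B(\lambda)u_2$ on $[\pi-\delta,\pi]$, and determine $A,B$ from the values of $(\phi,p\phi')$ at $x=\pi-\delta$ — already controlled from the middle interval — through the $2\times 2$ linear system whose coefficient matrix has rows $(u_1,u_2)$ and $(pu_1',pu_2')$. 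As in Lemma~\ref{lemma:growthorder2}, the determinant of that matrix is the $p$-weighted Wronskian of two solutions of \eqref{eq:1p}, hence constant in $x$, and evaluating it as $x\to\pi$ from the asymptotics of $u_1,u_2$ shows it is a nonzero constant; so $A,B$, and therefore $\phi$ on $[\pi-\delta,\pi]$, have growth order $1/2$. Together with the near-zero and middle-interval bounds this gives growth order at most $1/2$ for all $x\in[0,\pi]$, and Theorem~\ref{maxp} then yields the reality of all eigenvalues.

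The main obstacle is the first step in each of the last two paragraphs: producing the power-law bounds on $p$ and $p/f$ near the endpoints \emph{with quantitatively controlled error terms} and threading them through the Volterra iteration so that the resulting bound is $\exp(O(|\lambda|^{1/2}))$ and not something larger. This is precisely where $C^1$ regularity at the endpoints — which only yields $f(x)=(2/\pi)x+o(x)$ — is insufficient and the second-order Taylor expansion is essential; it is the concrete form of the warning in Remark~\ref{remark:growthorder3} that growth orders are not stable under approximation, so one must do the endpoint estimate directly rather than perturb off the piecewise-linear model. Once these bounds are in hand, the matching argument at $x=\pi$ is routine, being a verbatim adaptation of the corresponding part of the proof of Lemma~\ref{lemma:growthorder2} with the true Frobenius solutions in place of Bessel functions.
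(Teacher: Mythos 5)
Your proposal takes a genuinely different route from the paper. The paper does \emph{not} try to verify the growth-order hypothesis of Theorem~\ref{maxp} directly for general $f$; indeed Remark~\ref{remark:growthorder3} explicitly warns that ``this does not appear to be easy.'' Instead the paper argues by approximation: it replaces $f$ by a function $f_\delta$ that is linear on $[0,\delta]\cup[\pi-\delta,\pi]$ (so Lemma~\ref{lemma:growthorder2} applies and the $f_\delta$-eigenvalues are real), proves that $\phi_\delta(\pi,\lambda)\to\phi(\pi,\lambda)$ locally uniformly in $\lambda$ as $\delta\searrow 0$, and then lets the argument principle carry the reality of the zeros of $d_\delta(\lambda)=\phi_\delta(\pi,\lambda)-\phi_\delta(\pi,-\lambda)$ to the limit. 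The hypothesis that $f''(0)$ and $f''(\pi)$ exist enters in that proof through a quite different mechanism: a term of size $O(\delta^{\delta f''(\pi)/2}-1)$ appears in the matching estimate at $\pi-\delta$, and twice-differentiability is exactly what makes it tend to zero as $\delta\searrow 0$. Your plan, by contrast, tries to establish the order-$1/2$ bound directly, which, if it works, is cleaner and bypasses the whole $\delta$-limit.

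Near $x=0$ your argument is sound: once $f''(0)$ exists you get $p(x)\asymp x^{1+\nu}$ and $p/f\asymp x^\nu$ with genuinely bounded errors (with only $C^1$ regularity the integral $\int\bigl(\frac{1}{\von f(s)}-\frac{\nu}{s}\bigr)\,ds$ can diverge, so $p$ need not satisfy a two-sided power law), and the Picard iteration for the double-integral Volterra equation does give the $(C|\lambda|x)^n/(n!)^2$ bound, hence $I_0(2\sqrt{C|\lambda|x})$ and order $1/2$. This is a correct, quantitative Bessel-type estimate.

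There is, however, a genuine gap at $x=\pi$. You propose to construct both Frobenius solutions $u_1\sim 1$ and $u_2\sim(\pi-x)^\nu$ by ``analogous Volterra iterations.'' For $u_2$ this works exactly as near $0$. But for $u_1\sim 1$ the analogous iteration fails: near $\pi$ one has $w=p/f\asymp(\pi-x)^{-\nu}$ with $\nu=\pi/(2\von)>1$, which is \emph{not} integrable at $\pi$, so the inner integral $\int_0^t w(s)u_1(s)\,ds$ with $u_1\sim 1$ diverges, and you cannot even write the double-integral fixed-point equation, let alone iterate it to get $(n!)^{-2}$ factorials. (This is precisely why $u_1\notin L^2_w$ near $\pi$, whereas near $0$ the regular solution \emph{is} the $L^2_w$ one.) As written, this step does not go through. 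The gap is avoidable, though, because you never actually need $u_1$: Cramer's rule applied to your $2\times2$ system gives
\[
A(\lambda)=\phi(\pi,\lambda)=\frac{1}{W}\bigl[p\,u_2'\,\phi - u_2\,p\,\phi'\bigr]\Big|_{x=\pi-\delta},
\]
where $W$ is the constant $p$-weighted Wronskian (computable from the leading Frobenius asymptotics, hence a nonzero $\lambda$-independent constant), and this expression involves only $u_2$ and $\phi$ at $\pi-\delta$, both of which you have already shown to have order $1/2$. Since the proof of Theorem~\ref{maxp} in fact uses only the order of $g(z)=\phi(\pi,-iz^2)$ and not of $\phi(x,\cdot)$ for interior $x$, this is all that is needed. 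So the approach can be rescued, but you must drop the claim to have constructed $u_1$ by a Volterra iteration and argue via the connection coefficient and the Wronskian instead.
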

\begin{proof} By Theorem \ref{maxp} and Lemma \ref{lemma:growthorder2}
the result holds for functions $f$ which are linear in arbitrarily
small neighbourhoods of the endpoints.

Now suppose that we have a problem for which $f$ is not linear near
the endpoints. Nevertheless we may approximate $f$ by a function which
is linear on $[0,\delta]$ and $[\pi-\delta,\pi]$ for small $\delta$.
From \eqref{eq:3} and \eqref{eq:4}, we know that the condition for
$\mu$ to be an eigenvalue is that $\mu$ be a zero of the function
\[ d(\lambda) = \phi(\pi,\lambda) - \phi(\pi,-\lambda). \]
We shall attach subscripts $\delta$ to the quantities where $f$ is
replaced by a linear function on $[0,\delta]\cup [\pi-\delta,\pi]$:
thus we shall approximate $\phi(x,\lambda)$ by $\phi_{\delta}(x,\lambda)$
and $d(\lambda)$ by $d_\delta(\lambda)$. If we can show that
\be \lim_{\delta\searrow 0}(d_\delta(\lambda)-d(\lambda))=0 \label{eq:dcon}\ee
locally uniformly in $\lambda$, then the zeros of $d(\lambda)$
will be precisely the limits of the zeros of $d_\delta(\lambda)$.
{ This is achieved, for instance,} by using the argument principle to count zeros inside any
contour, exploiting the fact that for analytic functions the
locally uniform convergence of the functions implies locally
uniform convergence of their derivatives. Since the zeros of
$d_\delta$ all lie on the real axis, we shall have proved
our result.

Choose $f_\delta$ as follows. On $[0,\delta]\cup [\pi-\delta,\pi]$
$f_\delta$ should be linear and it should also match the values
of $f$ at $0$, $\delta$, $\pi-\delta$ and $\pi$: consequently
we shall have
\[ f_\delta(0)=f(0) = 0; \;\; f_\delta(\delta) = f(\delta); \;\;
 f_\delta(\pi-\delta)=f(\pi-\delta); \;\; f_\delta(\pi) = f(\pi) = 0; \]
also, as $\delta\searrow 0$, 
\[ f_\delta'(0) \rightarrow f'(0); \;\;\;
 f_\delta'(\pi) \rightarrow f'(\pi). \]
Frobenius analysis (more precisely, the uniform 
asymptotics in Lemma \ref{lemma:ass4}) shows that 
\be \phi(x,\lambda) = 1-\frac{i\lambda}{1+\von f'(0)} x + o(x); \;\;\;
   \phi'(x,\lambda) = -\frac{i\lambda}{1+\von f'(0)} + o(1) \label{eq:asscompare1} \ee
for small $x$. It is convenient also to have the
second solution $\Phi(x,\lambda)$ which satisfies
\be \Phi(x,\lambda) = x^{-1/(\von f'(0))}(1+O(x)), \;\;\;
   \Phi'(x,\lambda) = \frac{-1}{\von f'(0)}x^{-1-1/(\von f'(0))}(1+O(x)) \label{eq:asscompare2} \ee
for small $x$. 

Note that we also have similar asymptotics for $\phi_\delta$ and $\Phi_\delta$:
\[ \phi_\delta(x,\lambda) = 1-\frac{i\lambda}{1+\von f_\delta'(0)} x + o(x); \;\;\;
   \phi_\delta'(x,\lambda) = -\frac{i\lambda}{1+\von f_\delta'(0)} + o(1); \]
\[ \Phi(x,\lambda) = x^{-1/(\von f_\delta'(0))}(1+O(x)), \;\;\;
   \Phi'(x,\lambda) = -\frac{1}{\von f_\delta'(0)}x^{-1-1/(\von f_\delta'(0))}(1+O(x)). \]
Moreover the { correction terms can be bounded locally
uniformly in $x\in [0,\delta]$} by the remarks following
Lemma \ref{lemma:ass4}. Consequently, since $f_\delta'(0)-f'(0) = o(1)$,
\[ \phi(\delta,\lambda)-\phi_\delta(\delta,\lambda)
 = o(\delta); \;\;\;
\phi'(\delta,\lambda)-\phi_\delta'(\delta,\lambda)
 = o(1). \]
All of these bounds are locally uniform in $\lambda$.

Now on $[\delta,\pi-\delta]$ we have
\be \phi_\delta(x,\lambda) = c_1 \phi(x,\lambda) + c_2\Phi(x,\lambda) \label{eq:phidel} \ee
where the coefficients $c_1$ and $c_2$ are chosen to ensure continuity
and differentiability at $x=\delta$. In terms of the coefficient $p$
introduced in (\ref{eq:pdef}), and suppressing the $\lambda$-dependence
for simpler notation, we have
\[ \left(\begin{array}{cc} \phi(\delta) & \Phi(\delta)
 \\ p\phi'(\delta) & p\Phi'(\delta) \end{array}\right)
 \left(\begin{array}{c} c_1 \\ c_2 \end{array}\right)
 =\left(\begin{array}{c} \phi_\delta(\delta) \\ p\phi_\delta'(\delta) \end{array}\right), \]
whence, in terms of the determinant $\Delta$ of the matrix on the left
hand side of this system,
\[ \left(\begin{array}{c} c_1 \\ c_2 \end{array}\right)
 = \left(\begin{array}{c} 1 \\ 0 \end{array}\right) 
 + \frac{p(\delta)}{\Delta}\left(\begin{array}{c}
 \Phi'(\delta)(\phi_\delta(\delta)-\phi(\delta))
 - (\phi_\delta'(\delta)-\phi'(\delta))\Phi(\delta) \\
 \phi(\delta)(\phi_\delta'(\delta)-\phi'(\delta))
 - \phi'(\delta)(\phi_\delta(\delta)-\phi(\delta)) \end{array}\right). \]
Bearing in mind that from (\ref{eq:pass}), 
\[ p(\delta) = O(\delta^{1+1/(\von f'(0))}) \]
and that $\Delta$ is a $\delta$-independent constant, we obtain by
elementary estimates that
\[ c_1 = 1 + o(\delta); \;\;\; c_2 = o(\delta^{1+1/(\von f'(0))}). \]
From (\ref{eq:phidel}) it follows that for each fixed $x\in [0,\pi-\delta]$
we have
\[ \lim_{\delta\searrow 0}\phi_\delta(x,\lambda) = \phi(x,\lambda) \]
locally uniformly in $\lambda$. We need to extend this convergence
to $x=\pi$.

To this end we introduce the solutions $\psi(x)$ and $\Psi(x)$ for
the unperturbed system and $\psi_\delta(x)$ and $\Psi_\delta(x)$ for
the perturbed system, determined by the asymptotic behaviours
(see Lemma \ref{lemma:ass4} again)
\be \psi(x) = 1+\frac{i\lambda}{1-\von f'(\pi)}(\pi-x)+o(\pi-x), \;\;\;
   \Psi(x) = (\pi-x)^{1/(\von f'(\pi))}(1+O(\pi-x)),
 \label{eq:asscompare3} \ee
\be \psi_\delta(x) = 1+\frac{i\lambda}{1-\von f_\delta'(0)}(\pi-x)+o(\pi-x), \;\;\;
   \Psi_\delta(x) = (\pi-x)^{1/(\von f_\delta'(\pi))}(1+O(\pi-x)).
 \label{eq:asscompare4} \ee
We write
\[ \phi_\delta(x) = c_{1,\delta}\psi_\delta(x) +
  c_{2,\delta}\Psi_\delta(x) \]
and 
\[ \phi(x) = c_{1}\psi(x) +
  c_{2}\Psi(x). \]
In view of the asymptotics it follows that
\[ \phi(\pi) = c_1; \;\;\; \phi_{\delta}(\pi) = c_{1,\delta}. \]
Therefore in order to establish that $\lim_{\delta\searrow 0}\phi_{\delta}(\pi,\lambda) 
 = \phi(\pi,\lambda)$
locally uniformly in $\lambda$ it is sufficient for us to prove 
that $c_{1,\delta}$ converges locally uniformly to $c_1$. 
Now
\[ \left(\begin{array}{cc} \psi_\delta(\pi-\delta) &
 \Psi_\delta(\pi-\delta) \\
 p\psi_\delta'(\pi-\delta) & p\Psi_\delta'(\pi-\delta)\end{array}\right)
 \left(\begin{array}{c} c_{1,\delta} \\ c_{2,\delta}\end{array}\right)
 = 
 \left(\begin{array}{c} \phi_\delta(\pi-\delta) \\ 
 p\phi_\delta'(\pi-\delta) \end{array}\right), \]
and a similar equation holds with $\delta=0$. 
A calculation similar to the ones performed before shows that
\[ { \begin{array}{lcl} c_{1,\delta} & = & c_1 + {\displaystyle \frac{p}{\Delta}
 \left\{  (\Psi_\delta'-\Psi')\phi_\delta
 + \Psi'(\phi_\delta-\phi) \right. } \\
 & & \\
 & & {\displaystyle \left. \left. - (\Psi_\delta-\Psi)\phi_\delta'
 - \Psi(\phi_\delta'-\phi') \right\}\right|_{x=\pi-\delta} . } \end{array}}\]
We now estimate the various terms, bearing in mind that from
(\ref{eq:pass}) we have 
\[ p(\pi-\delta) = O(\delta^{1-1/(\von f'(\pi))}) \]
which may be large. Consider, for example, the term
\[ p(\pi-\delta)(\Psi_\delta'(\pi-\delta)-\Psi'(\pi-\delta))\phi_\delta(\pi-\delta). \]

In this term we have
\[ \Psi_\delta'(\pi-\delta)-\Psi'(\pi-\delta) = O(\delta^{1/(\von f'(\pi))-1}
 (\delta^{1/(\von f_\delta'(\pi))-1/(\von f'(\pi))}-1)). \]
The multiplicative factor of $\delta^{1/(\von f'(\pi))-1}$
cancels with the $p(\pi-\delta)$ term, while the $\phi_\delta(\pi-\delta)$
term is close to $\phi(\pi-\delta)$ which remains bounded as
$\delta\searrow 0$, since all solutions of the unperturbed equation
are bounded in a neighbourhood of $\pi$. This leaves a term
\[ O(\delta^{\nu} - 1) \]
in which
\[ \nu = \von^{-1}\frac{f'(\pi)-f_\delta'(\pi)}{f_\delta'(\pi)
 f'(\pi)}. \]
However the fact that $f$ is twice differentiable at $\pi$ ensures from
the construction of $f_\delta$ that
\[ f_\delta'(\pi) - f'(\pi) = \frac{\delta}{2}f''(\pi) + o(1) \]
and so the term which is $ O(\delta^{\nu} - 1) $ is actually
\[ O(\delta^{\delta f''(\pi)/2}-1). \]
This tends to zero as $\delta$ tends to zero, regardless of whether
$f''(\pi)$ be positive or negative. 

The other terms may be dealt with in a similar way. Since all the
asymptotics are locally uniform in $\lambda$, we get
\[ \lim_{\delta\searrow 0}\phi_\delta(\pi,\lambda) =  \phi(\pi,\lambda) \]
locally uniformly with respect to $\lambda$. The result follows.
\end{proof}

\appendix
\section{Appendix: Asymptotics}\label{section:ass}
In this Appendix we examine the asymptotic behaviour of solutions $u$ 
of (\ref{eq:1}) and their derivatives, separately near $x=0$ and $x=\pi$. 
In order to have a completely rigorous treatment in each case we shall 
transform the problem to an infinite interval and invoke the
Levinson Theorem \cite[Theorem 1.3.1]{kn:Eastham}.

Consider first the behaviour in a neighbourhood of $0$. Make the
change of variable $x=\exp(-t)$ so that $0$ maps to $\infty$. Eqn.
(\ref{eq:1}) now becomes
\be i\von \frac{d}{dt}\left(\mbox{e}^tf(\mbox{e}^{-t})\frac{du}{dt}\right)
 - i\frac{du}{dt} = \lambda \mbox{e}^{-t}u. \label{eq:ax1} \ee
Under our hypotheses on $f$ we know that for all $x$ { in a neighbourhood of 0}, 
\[ f(x) = x f'(0) + r(x), \]
where $|r(x)|\leq Cx^2$ for some constant $C>0$. As a consequence
\[ \mbox{e}^tf(\mbox{e}^{-t}) = f'(0) + \rho(t), \]
where
\be |\rho(t)| \leq C\exp(-t). \label{eq:ax2} \ee
Consequently we can write (\ref{eq:ax1}) as a first order system
\be \frac{d}{dt}\left(\begin{array}{c} u \\
 (f'(0) + \rho(t))u'\end{array} \right) 
 = \left(\begin{array}{cc} 0 & \frac{1}{f'(0)+\rho(t)} \\
 \frac{-i\lambda}{ \von}\mbox{e}^{-t} & \frac{\von^{-1}}{f'(0)+\rho(t)} \end{array}\right)
 \left(\begin{array}{c} u \\
 (f'(0) + \rho(t))u'\end{array} \right) \label{eq:ax3} \ee
or as
\be \frac{d{\bf u}}{dt} = (A_\infty + R(t)){\bf u}, \label{eq:ax4} \ee
where 
\[ {\bf u} = \left(\begin{array}{c} u \\
 (f'(0) + \rho(t))u'\end{array} \right), \]
\[ A_\infty = \left(\begin{array}{cc} 0 & 1/f'(0) \\
 0 & \von^{-1}/f'(0) \end{array}\right), \]
and 
\be \| R(t) \| \leq C \mbox{e}^{-t}. \label{eq:ax3.5} \ee
The matrix $A_\infty$ has eigenvalues $0$ and $(\von f'(0))^{-1}$ and is
diagonalizable:
\[  A_\infty = V \Lambda V^{-1}, \]
where $\Lambda = \mbox{diag}(0,(\von f'(0))^{-1})$, 
\[ V = \left(\begin{array}{cc} 1 & \von \\
 0 & 1 \end{array}\right). \] 
Consequently the system (\ref{eq:ax4}) can be diagonalized by the transformation
$ {\bf v} = V^{-1}{\bf u}$, and
\be \frac{d{\bf v}}{dt} = (\Lambda + V^{-1}R(t)V){\bf v}. \label{eq:ax5}\ee

\begin{lemma}\label{lemma:ass0}
Suppose that $T$ is chosen sufficiently large to ensure that 
\[ \int_T^\infty \| V^{-1} R(t) V \| dt < 1/2. \]
Then the system (\ref{eq:ax5}) has solutions ${\bf v}_1$ and
${\bf v}_2$ such that
\be {\bf v}_1(t) = { \begin{pmatrix} 1\\0 \end{pmatrix}} + {\bf r}_1(t), \label{eq:ax6} \ee
\be {\bf v}_2(t) = \exp(t/(\von f'(0)))\left\{{ \begin{pmatrix} 0\\1 \end{pmatrix}} + {\bf r}_2(t)\right\}, \label{eq:ax7} \ee
where, for all $t \geq T$, 
\be \max(\| {\bf r}_1(t) \|, \| {\bf r}_2(t) \|)
 \leq 2\int_t^\infty \| V R(s) V^{-1} \| ds . \label{eq:ax8} \ee
Consequently, in view of (\ref{eq:ax3.5}) and the transformation
$ {\bf v} = V^{-1}{\bf u}$, the linear system (\ref{eq:ax4}) possesses 
solutions ${\bf u}_1$ and ${\bf u}_2$ such that
\be {\bf u}_1(t) = V{ \begin{pmatrix} 1\\0 \end{pmatrix}} + {\bf \eta}_1(t), \label{eq:ax9} \ee
\be {\bf u}_2(t) = \exp(t/(\von f'(0)))\left\{V{ \begin{pmatrix} 0\\1 \end{pmatrix}} + \eta_2(t)\right\}, \label{eq:ax10} \ee
where, for some constant $C>0$ and for all $t \geq T$, 
\be \max(\| \eta_1(t) \|, \| \eta_2(t) \|)
 \leq { C \exp(-t).} \label{eq:ax11} \ee
\end{lemma}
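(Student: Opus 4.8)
The plan is to recognise Lemma~\ref{lemma:ass0} as the effective ($L^1$-perturbation) version of the Levinson asymptotic theorem \cite[Theorem~1.3.1]{kn:Eastham} for the diagonalised system (\ref{eq:ax5}), and to extract the quantitative remainder (\ref{eq:ax8}) by running the standard contraction-mapping construction with the constants kept. Write $B(t):=V^{-1}R(t)V$, put $\delta:=(\von f'(0))^{-1}=\pi/(2\von)$ (so $\delta>1$ when $\von<\pi/2$), and let $\mathbf e_1,\mathbf e_2$ denote the standard basis of $\Cc^2$; by (\ref{eq:ax3.5}), $\|B(t)\|\le C\mathrm e^{-t}$, and for $T$ as in the hypothesis $\int_T^\infty\|B\|<1/2$. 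The two eigenmodes of $\Lambda=\mathrm{diag}(0,\delta)$ will be built from two different integral equations, as they require opposite integration directions.

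For the recessive solution I would take $\mathbf v_1$ to be the fixed point, in $C_b([T,\infty);\Cc^2)$ with the supremum norm, of
\[
 \mathcal T_1[\mathbf v](t)=\mathbf e_1-\int_t^\infty\mathrm e^{\Lambda(t-s)}B(s)\,\mathbf v(s)\,ds .
\]
Since $\|\mathrm e^{\Lambda(t-s)}\|=\max(1,\mathrm e^{-\delta(s-t)})=1$ for $s\ge t$, $\mathcal T_1$ maps the ball of radius $2$ into itself and is a contraction there with constant $\int_T^\infty\|B\|<1/2$; its fixed point $\mathbf v_1$ is a $C^1$ solution of (\ref{eq:ax5}) with $\|\mathbf v_1\|_\infty\le2$, and
\[
 \|\mathbf r_1(t)\|=\Big\|\int_t^\infty\mathrm e^{\Lambda(t-s)}B(s)\mathbf v_1(s)\,ds\Big\|\le2\int_t^\infty\|B(s)\|\,ds ,
\]
which is (\ref{eq:ax6}) and the $j=1$ case of (\ref{eq:ax8}).

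For the dominant solution I would substitute $\mathbf v_2(t)=\mathrm e^{\delta t}\mathbf w(t)$, so that $\mathbf w'=(\widetilde\Lambda+B)\mathbf w$ with $\widetilde\Lambda=\mathrm{diag}(-\delta,0)$, and take $\mathbf w$ to be the fixed point of
\[
 \mathcal T_2[\mathbf w](t)=\mathbf e_2+\int_T^t\mathrm e^{\widetilde\Lambda(t-s)}P_1B(s)\mathbf w(s)\,ds-\int_t^\infty\mathrm e^{\widetilde\Lambda(t-s)}P_2B(s)\mathbf w(s)\,ds ,
\]
where $P_1=\mathrm{diag}(1,0)$, $P_2=\mathrm{diag}(0,1)$. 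One has $\|\mathrm e^{\widetilde\Lambda(t-s)}P_1\|=\mathrm e^{-\delta(t-s)}\le1$ for $s\le t$ and $\|\mathrm e^{\widetilde\Lambda(t-s)}P_2\|=1$ for $s\ge t$, and because the two integrals run over the \emph{disjoint} ranges $[T,t]$ and $[t,\infty)$, the operator $\mathcal T_2$ still has Lipschitz constant $\int_T^\infty\|B\|<1/2$ and maps the ball of radius $2$ into itself; so $\mathbf w$ exists, $\|\mathbf w\|_\infty\le2$, and $\mathbf w(t)\to\mathbf e_2$ (both integrals tend to $0$: the second because $\|B\|\in L^1$, the first by dominated convergence, since $\mathrm e^{-\delta(t-s)}\mathbf 1_{s\le t}\to0$ pointwise and is dominated by $\|B\|$). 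This is (\ref{eq:ax7}). Undoing the diagonalisation, $\mathbf u_j:=V\mathbf v_j$ solves (\ref{eq:ax4}) and has the form (\ref{eq:ax9})--(\ref{eq:ax10}) with $\eta_j=V\mathbf r_j$ (after extracting $\mathrm e^{\delta t}$ in the second case); then $\|\eta_j(t)\|\le\|V\|\,\|\mathbf r_j(t)\|=O(\mathrm e^{-t})$ by the estimates in the next paragraph, which is (\ref{eq:ax11}).

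The step I expect to require the most care is the remainder estimate for the \emph{dominant} solution. Unlike $\mathbf r_1$, the error $\mathbf r_2(t)=\mathbf w(t)-\mathbf e_2$ carries a mixing contribution $\int_T^t\mathrm e^{-\delta(t-s)}P_1B(s)\mathbf w(s)\,ds$, which for a generic $L^1$ perturbation is \emph{not} dominated by the pure tail $\int_t^\infty\|B\|$ appearing in (\ref{eq:ax8}) — it can decay either faster or slower. To close the estimate one must invoke the explicit rate $\|R(t)\|\le C\mathrm e^{-t}$ from (\ref{eq:ax3.5}) together with $\delta=\pi/(2\von)>1$: then $\int_T^t\mathrm e^{-\delta(t-s)}\mathrm e^{-s}\,ds=O(\mathrm e^{-t})$, so $\|\mathbf r_2(t)\|=O(\mathrm e^{-t})$, a bound of the same shape as (\ref{eq:ax8}) up to a constant, and (\ref{eq:ax11}) follows (for $\von\ge\pi/2$ the same argument yields instead the slower, but still adequate, rate $\mathrm e^{-\pi t/(2\von)}$). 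The remaining points — that $\mathcal T_1,\mathcal T_2$ land in $C_b$, that the fixed points are genuine $C^1$ solutions of (\ref{eq:ax4}), and that $\mathbf u_1,\mathbf u_2$ are linearly independent (their leading terms $V\mathbf e_1$, $V\mathbf e_2$ are) — are routine.
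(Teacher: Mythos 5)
Your proposal is correct, and it is essentially the proof that the paper declines to give: the paper's own argument is the single sentence ``This is just the statement of Levinson's Theorem for our problem,'' i.e.\ a citation of \cite[Theorem~1.3.1]{kn:Eastham}. What you have done is unpack the standard proof of that theorem in the $2\times2$ case with the constants tracked. The contraction $\mathcal T_1$ for the recessive solution, and the split (Volterra on $[T,t]$ for the subdominant projection $P_1$, tail on $[t,\infty)$ for the dominant projection $P_2$) operator $\mathcal T_2$ for the dominant one, is precisely the Levinson/Coddington--Levinson construction, so the route is the same one the paper is implicitly invoking, only made explicit.

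You are also right to flag that the remainder bound \eqref{eq:ax8} as written is not a consequence of $L^1$-smallness alone. For $\mathbf r_1$ the pure tail-integral bound $2\int_t^\infty\|V^{-1}RV\|$ is immediate from $\mathcal T_1$, but $\mathbf r_2$ picks up the mixing term $\int_T^t e^{-\delta(t-s)}\|V^{-1}R(s)V\|\,\|\mathbf w(s)\|\,ds$ with $\delta=1/(\von f'(0))=\pi/(2\von)$, which for a generic $L^1$ perturbation is not controlled by the tail $\int_t^\infty\|V^{-1}RV\|$ (it can decay more slowly). Using the actual rate $\|R(t)\|\le C\mathrm e^{-t}$ from \eqref{eq:ax3.5}, your estimate $\int_T^t e^{-\delta(t-s)}e^{-s}\,ds=O(e^{-\min(1,\delta)t})$ gives exactly the $O(e^{-t})$ claimed in \eqref{eq:ax11} when $\delta>1$, i.e.\ $\von<\pi/2$, which is the regime actually used in the body of the paper; for $\pi/2\le\von<\pi$ one gets only $O(e^{-\pi t/(2\von)})$, which still yields the qualitative $o(1)$ asymptotics needed downstream in Lemmas~\ref{lemma:ass1}--\ref{lemma:ass2}. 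Finally, you implicitly corrected a small typographical inconsistency in the statement: \eqref{eq:ax8} writes $\|VR(s)V^{-1}\|$, whereas both the hypothesis of the lemma and the perturbed system \eqref{eq:ax5} involve $\|V^{-1}R(s)V\|$; your $B=V^{-1}RV$ is the right quantity.
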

\begin{proof} 
This is just the statement of Levinson's Theorem for our problem.
\end{proof}
It is now a simple matter to translate this lemma into information about
$u(x)$, $u'(x)$ and $f(x)u'(x)$ for $x$ in a neighbourhood of $0$:
\begin{lemma}\label{lemma:ass1}
For the equation (\ref{eq:1}), there exist solutions $u_1$ and $u_2$
such that
\be 
 \left(\begin{array}{c} 
 u_1(x) \\ -f(x)u_1'(x) \end{array}\right) 
 = \left(\begin{array}{c} 1 \\ 0 \end{array}\right) + \eta_1(x), \label{eq:ax12} \ee
\be 
 \left(\begin{array}{c} 
 u_2(x) \\ -f(x)u_2'(x) \end{array}\right) 
 = x^{-1/(\von f'(0))}\left\{ \left(\begin{array}{c} \von  \\ 1 \end{array}\right) 
 + \eta_2(x)\right\}, \label{eq:ax13} \ee
where $\| \eta_1(x) \| \leq Cx$, $\| \eta_2(x) \| \leq Cx$ for all $0\leq x \leq x_0$,
where $x_0$ is some positive real number. Moreover the error bounds are locally
uniform in $\lambda$ and depend only on $\lambda$, $\varepsilon$ and { $\sup_{0<x<x_0}|r(x)|$}.
\end{lemma}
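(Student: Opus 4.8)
The plan is to observe that Lemma~\ref{lemma:ass1} is nothing but Lemma~\ref{lemma:ass0} read back in the original variable $x$, so that the proof reduces to chain-rule bookkeeping together with a careful look at the dependence of the error constants. The first step is to pin down the exact correspondence between a solution $u$ of \eqref{eq:1} near $x=0$ and the associated solution $\mathbf{u}$ of \eqref{eq:ax4}. Since $x=\mathrm{e}^{-t}$ gives $dx/dt=-x$, we have $du/dt=-x\,u'(x)$; and since $f'(0)+\rho(t)=\mathrm{e}^{t}f(\mathrm{e}^{-t})=f(x)/x$, the lower component of $\mathbf{u}$ equals $(f'(0)+\rho(t))\,du/dt=(f(x)/x)\bigl(-x\,u'(x)\bigr)=-f(x)u'(x)$. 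Thus $\mathbf{u}(t)=\bigl(u(x),\,-f(x)u'(x)\bigr)^{\mathsf T}$ with $x=\mathrm{e}^{-t}$, and this identification is a bijection between solutions of \eqref{eq:1} on $(0,x_0)$ and solutions of \eqref{eq:ax4} on $(T,\infty)$, where $x_0=\mathrm{e}^{-T}$.

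The second step is purely a substitution. Evaluate the constant vectors appearing in Lemma~\ref{lemma:ass0}: $V\binom{1}{0}=\binom{1}{0}$ and $V\binom{0}{1}=\binom{\von}{1}$, while $\exp(t/(\von f'(0)))=x^{-1/(\von f'(0))}$. Feeding these into \eqref{eq:ax9}--\eqref{eq:ax10} and using the identification above produces exactly the two displayed identities of Lemma~\ref{lemma:ass1}, with the error vectors $\eta_1,\eta_2$ now regarded as functions of $x$ (in the $u_2$ case the scalar prefactor $x^{-1/(\von f'(0))}$ is pulled outside the braces, matching \eqref{eq:ax13}). The estimate $\|\eta_j(t)\|\le C\mathrm{e}^{-t}$ from \eqref{eq:ax11} becomes $\|\eta_j(x)\|\le Cx$ because $\mathrm{e}^{-t}=x$, and the range $t\ge T$ turns into $0\le x\le x_0$.

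The last step is to justify the claim on the constants, and this is the one point that needs genuine care rather than mere transcription: one must re-examine the quantitative input to Levinson's theorem. The perturbation in \eqref{eq:ax4} obeys $\|R(t)\|\le C\mathrm{e}^{-t}$ with $C$ controlled by $|\lambda|$, $\von$ and $\sup_{0<x<x_0}|r(x)|$ — the $\lambda$-dependence entering only through the $(-\mathrm{i}\lambda/\von)\mathrm{e}^{-t}$ entry, and the entries involving $1/(f'(0)+\rho(t))$ staying bounded once $t\ge T$ keeps $|\rho(t)|$ small. The threshold $T$ (equivalently $x_0$) can then be chosen so that $\int_T^\infty\|V^{-1}RV\|\,dt<1/2$, and since $C$ depends continuously on $\lambda$ this choice can be made uniformly for $\lambda$ in any compact set, which yields the asserted local uniformity and the stated dependence of $x_0$ and $C$. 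Everything else is a direct translation of Lemma~\ref{lemma:ass0}.
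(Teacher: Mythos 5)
Your proposal is correct and takes exactly the route the paper intends: the paper presents Lemma~\ref{lemma:ass1} immediately after Lemma~\ref{lemma:ass0} with only the remark that ``it is now a simple matter to translate,'' and your chain-rule bookkeeping (identifying the second component of $\mathbf{u}$ as $-f(x)u'(x)$, substituting $x=\mathrm{e}^{-t}$ into $\exp(t/(\von f'(0)))$ and into $\mathrm{e}^{-t}$, and evaluating $V\binom{1}{0}$, $V\binom{0}{1}$) is precisely that translation. Your remarks on how $\lambda$, $\von$ and the remainder $r$ enter the Levinson constants and on the local uniformity in $\lambda$ also supply the only point that merits genuine care, so nothing is missing.
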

Observe that the solution $u_1$ is the solution $\phi$ which is regular near zero,
used throughout our article. 

A similar lemma can be proved concerning behaviour near $x=\pi$:
\begin{lemma}\label{lemma:ass2}
For the equation (\ref{eq:1}), there exist solutions $u_3$ and $u_4$
such that
\be 
 \left(\begin{array}{c} 
 u_3(x) \\ -f(x)u_3'(x) \end{array}\right) 
 = \left(\begin{array}{c} 1 \\ 0 \end{array}\right) + \eta_3(x), \label{eq:ax14} \ee
\be 
 \left(\begin{array}{c} 
 u_4(x) \\ -f(x)u_4'(x) \end{array}\right) 
 = (\pi-x)^{1/(\von f'(0))}\left\{ \left(\begin{array}{c} -\von  \\ 1 \end{array}\right) 
 + \eta_4(x)\right\}, \label{eq:ax15} \ee
where $\| \eta_1(x) \| \leq C(\pi-x)$, $\| \eta_2(x) \| \leq C(\pi-x)$ for all $\pi-x_0 \leq x 
\leq \pi$, where $x_0$ is some positive real number. Moreover the error bounds are 
locally uniform in $\lambda$ and depend only on $\lambda$, $\varepsilon$ and { $\sup_{-x_0<x<0}|r(x)|$}.
\end{lemma}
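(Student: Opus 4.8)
The plan is to prove Lemma~\ref{lemma:ass2} by repeating, almost verbatim, the three-step argument of Section~\ref{section:ass} that produced Lemma~\ref{lemma:ass1}, but with the singular point placed at $x=\pi$ instead of $x=0$. First I would substitute $\pi-x=\mathrm{e}^{-t}$, so that $x=\pi$ corresponds to $t\to+\infty$ and $(\pi-x_0,\pi)$ to $(T_0,\infty)$ with $T_0=-\ln x_0$. The anti-symmetry \eqref{eq:f_sym} gives $f(\pi-y)=-f(-y)=f(y)$, and writing $f(x)=-f(x-\pi)$ displays the local behaviour of $f$ at $\pi$ as $f(x)=(\pi-x)f'(0)-r(x-\pi)$, where $r$ is precisely the Taylor remainder of $f$ at $0$ (now evaluated on the reflected interval $(-x_0,0)$), with $|r(x-\pi)|\le C(\pi-x)^2$; this is the reason the error constants can in the end be expressed through $\sup_{-x_0<x<0}|r(x)|$. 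Under this substitution \eqref{eq:1} takes exactly the shape of \eqref{eq:ax1}, with the coefficient $\mathrm{e}^{t}f(\mathrm{e}^{-t})$ replaced by $P(t):=\mathrm{e}^{t}f(\pi-\mathrm{e}^{-t})=f'(0)+\rho_\pi(t)$, $|\rho_\pi(t)|\le C\mathrm{e}^{-t}$, except that the first-order term now carries the opposite sign --- this sign reversal, caused by the reflection $x\mapsto\pi-x$, is the only structural difference from the analysis near $0$.

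Next I would rewrite the equation as a first-order system $\mathbf{u}'=(A_\infty+R(t))\mathbf{u}$ with $\|R(t)\|\le C\mathrm{e}^{-t}$, exactly as in \eqref{eq:ax3}--\eqref{eq:ax4}. Because of the reversed first-order term, the constant matrix $A_\infty$ now has eigenvalues $0$ and $-1/(\varepsilon f'(0))$ rather than $0$ and $+1/(\varepsilon f'(0))$; it is again diagonalizable, and diagonalizing and applying Levinson's theorem exactly as in Lemma~\ref{lemma:ass0} yields solutions $\mathbf{u}_3,\mathbf{u}_4$ of the system of the form
\[
 \mathbf{u}_3(t)=V\begin{pmatrix}1\\0\end{pmatrix}+\eta_3(t),\qquad
 \mathbf{u}_4(t)=\mathrm{e}^{-t/(\varepsilon f'(0))}\Bigl\{V\begin{pmatrix}0\\1\end{pmatrix}+\eta_4(t)\Bigr\},
\]
with $V$ the relevant (explicitly computable) eigenvector matrix and $\max(\|\eta_3(t)\|,\|\eta_4(t)\|)\le C\mathrm{e}^{-t}$ for $t\ge T_0$, all bounds locally uniform in $\lambda$ and depending only on $\lambda$, $\varepsilon$ and $\sup_{-x_0<x<0}|r(x)|$.

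Finally I would undo the substitution. Since $\mathrm{e}^{-t}=\pi-x$, the factor $\mathrm{e}^{-t/(\varepsilon f'(0))}$ becomes $(\pi-x)^{1/(\varepsilon f'(0))}$, the second (``momentum'') component of $\mathbf{u}$ becomes $\pm f(x)u'(x)$ (the sign again dictated by the reflection), and $\mathrm{e}^{-t}\le C(\pi-x)$ converts the estimates on $\eta_3,\eta_4$ into $\|\eta_3(x)\|,\|\eta_4(x)\|\le C(\pi-x)$ for $\pi-x_0\le x\le\pi$. Reading off the two components of $\mathbf{u}_3$ and $\mathbf{u}_4$ then gives precisely \eqref{eq:ax14}--\eqref{eq:ax15}, with the stated dependence of the constants inherited from Lemma~\ref{lemma:ass0} together with the first step.

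As to the main obstacle: there is no genuinely hard point --- Lemma~\ref{lemma:ass2} is the mirror image of Lemma~\ref{lemma:ass1}, and once the transformation and the relevant sign changes are set up correctly, Levinson's theorem does all the work. The only thing that demands care is precisely that sign bookkeeping: the reflection $x\mapsto\pi-x$ flips the first-order term of \eqref{eq:1}, hence flips the sign of the non-zero eigenvalue of $A_\infty$, which in turn decides which of the two Frobenius solutions at $\pi$ is the bounded, decaying one; and one must be sure that it is the anti-symmetry \eqref{eq:f_sym}, and not any additional regularity of $f$, that is used here, so that the error constants can honestly be traced back to $\sup_{-x_0<x<0}|r(x)|$.
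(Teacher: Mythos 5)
Your proposal is correct and coincides with the paper's intent: the authors give no separate proof of Lemma~\ref{lemma:ass2}, remarking only that ``a similar lemma can be proved,'' and the proof is exactly the mirror-image Levinson argument you describe, including the observation that $f(x)=-f(x-\pi)$ is what makes the error constant depend on $\sup_{-x_0<x<0}|r(x)|$. The sign bookkeeping you flag as the one delicate point is indeed the place to be careful: tracing the substitution $\pi-x=\mathrm e^{-t}$ literally, the natural second component of the Levinson vector near $\pi$ comes out as $+f(x)u'(x)$ rather than $-f(x)u'(x)$, so that the vector $(-\von,1)$ in \eqref{eq:ax15} is only consistent after flipping the normalisation of $u_4$ (equivalently, the lemma as printed contains a harmless sign slip); this has no effect on anything downstream.
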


These results are sufficient for most of our article, but not quite for the estimates
in the proof of Theorem \ref{theorem:final}. In particular, although Lemma \ref{lemma:ass1}
gives the results (\ref{eq:asscompare2}) and (\ref{eq:asscompare4}), it does not quite
give the results (\ref{eq:asscompare1}) and (\ref{eq:asscompare3}), since the terms
$\frac{i\lambda}{1+\von f'(0)}x$ are absorbed into the error terms. 
The solution to this
problem relies on using an integral equation, eqn. (1.4.13) in \cite[p. 12]{kn:Eastham},
according to which the solution ${\bf v}_1$ of Lemma \ref{lemma:ass0} will satisfy
\be {\bf v}_1(t) = \left(\begin{array}{c} 1 \\ 0 \end{array}\right)
 - \int_t^\infty { F(t)F^{-1}(s)}V^{-1}R(s)V {\bf v}_1(s)ds, 
 \label{eq:vbetter} \ee
in which, for our system,
\be { F(t)} = \left(\begin{array}{cc} 1 & 0 \\ 0 & \exp(t/(\von f'(0))) \end{array}\right). 
\label{eq:ass17} \ee
The higher quality approximation is achieved by replacing ${\bf v}_1$ in the integral
on the right hand side by its approximation from Lemma \ref{lemma:ass0}, so that 
\be V^{-1}R(s)V {\bf v}_1(s) = V^{-1}R(s)V \left(\begin{array}{c} 1 \\ 0 \end{array}\right) + O(\exp(-2s)). 
 \label{eq:ass18} \ee
 
Suppose that $f''(0)$ and $f''(\pi)$ exist. Some explicit calculations show that
\[ V^{-1}R(s) V = 
 \mbox{e}^{-s}\left(\begin{array}{cc} \lambda i & \lambda i \von \\
 -\lambda i \von^{-1} & -\lambda i - f''(0)/{ (2\von f'(0)^2)} \end{array}\right) +   \mbox{e}^{-s} { \begin{pmatrix} 0 & 0 \\ 0 & g(s)  \end{pmatrix}}\]
where 
\be { g(t) = O(\mbox{e}^t(\mbox{e}^tf(\mbox{e}^{-t})-f'(0))-f''(0)/2) \label{eq:gdef2}} \ee
tends to zero as $t\rightarrow\infty$. Hence
\be  V^{-1}R(s)V {\bf v}_1(s) = \mbox{e}^{-s}\lambda i \left(\begin{array}{c}
 1 \\ -\von^{-1} \end{array}\right) +O(\mbox{e}^{-2s}). \label{eq:ass19}\ee
Substituting (\ref{eq:ass17}) and (\ref{eq:ass19}) back into (\ref{eq:vbetter}) and transforming
back to the original variables yields the following result:
\begin{lemma}\label{lemma:ass4} The solution $u_1$ of (\ref{eq:1}) mentioned in Lemma
\ref{lemma:ass1} has an asymptotic form
\be \left(\begin{array}{c} u_1(x) \\ -f(x)u_1'(x) \end{array}\right)
 = \left(\begin{array}{c} 1 \\ 0 \end{array}\right)
 +\frac{\lambda i x }{1+\von f'(0)}\left( \begin{array}{c}
 -1 \\ f'(0) \end{array}\right) + \eta_5(x), \label{assunif} \ee 
where $\|\eta_5(x) \| \leq C(x^2+|x^{-1}f(x)-f'(0)-xf''(0)/2|) $ for all $x$ in some interval 
$[0,x_0]$, $x_0>0$, and the constant $C$ depends only on $\lambda$, $\varepsilon$ and { $\sup_{0<x<x_0}|r(x)|$}.
\end{lemma}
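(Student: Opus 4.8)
The plan is to sharpen the Levinson analysis near $x=0$ by exactly one order, working from the integral equation \eqref{eq:vbetter} rather than from the rough bound of Lemma~\ref{lemma:ass0}. The first step is to write the perturbation in the diagonalized variables exactly. A direct computation from \eqref{eq:ax3} gives $R_{11}=0$, $R_{21}=-i\lambda\von^{-1}e^{-t}$, and the algebraic identity $R_{22}=\von^{-1}R_{12}$, whence
\[
 V^{-1}R(t)V = e^{-t}\left(\begin{array}{cc} i\lambda & i\lambda\von \\ -i\lambda\von^{-1} & -i\lambda \end{array}\right) - \frac{\von^{-1}\rho(t)}{f'(0)\,(f'(0)+\rho(t))}\left(\begin{array}{cc} 0 & 0 \\ 0 & 1 \end{array}\right)
\]
with no further error, where $\rho(t)=e^{t}f(e^{-t})-f'(0)$. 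Two structural facts matter: (i) the first column and the $(1,2)$ entry are \emph{exact}, so $V^{-1}R(t)V\binom{1}{0}=e^{-t}i\lambda\binom{1}{-\von^{-1}}$ with no remainder; and (ii) under the hypothesis that $f''(0)$ exists, $\rho(t)=\tfrac12 f''(0)e^{-t}+\sigma(t)$ where, with $x=e^{-t}$, $|\sigma(t)|=\bigl|x^{-1}f(x)-f'(0)-\tfrac12 xf''(0)\bigr|$ is exactly the quantity appearing in the claimed bound and is $o(e^{-t})$; expanding the $(2,2)$ entry then produces the form used in \eqref{eq:gdef2}, the non-smoothness of $f$ entering only through $\sigma$.

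Next I would insert into the integrand of \eqref{eq:vbetter} the approximation ${\bf v}_1(s)=\binom{1}{0}+{\bf r}_1(s)$ with $\|{\bf r}_1(s)\|\le Ce^{-s}$ (Lemma~\ref{lemma:ass0}); by fact (i), $V^{-1}R(s)V\,{\bf v}_1(s)=e^{-s}i\lambda\binom{1}{-\von^{-1}}$ plus a remainder that is $O(e^{-2s})$ together with a term bounded by $C|\sigma(s)|e^{-s}$ coming from the $(2,2)$ entry. With $F(t)=\mathrm{diag}(1,e^{t/(\von f'(0))})$ as in \eqref{eq:ass17}, the main term of \eqref{eq:vbetter} reduces to the elementary integrals $\int_t^\infty e^{-s}\,ds=e^{-t}$ and $\int_t^\infty e^{(t-s)/(\von f'(0))}e^{-s}\,ds=\tfrac{\von f'(0)}{1+\von f'(0)}e^{-t}$, giving ${\bf v}_1(t)=\binom{1}{0}+i\lambda e^{-t}\binom{-1}{f'(0)/(1+\von f'(0))}+\eta(t)$, where $\eta(t)$ is the integral of the remainder against $\mathrm{diag}(1,e^{(t-s)/(\von f'(0))})$; since that factor is $\le 1$ for $s\ge t$ one gets $\|\eta(t)\|\le C\int_t^\infty(e^{-2s}+|\sigma(s)|e^{-s})\,ds$.

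Finally I would undo the diagonalization. Applying $V=\bigl(\begin{smallmatrix}1&\von\\0&1\end{smallmatrix}\bigr)$ and using the identity $V\binom{-1}{f'(0)/(1+\von f'(0))}=(1+\von f'(0))^{-1}\binom{-1}{f'(0)}$ turns the explicit correction into $\tfrac{i\lambda e^{-t}}{1+\von f'(0)}\binom{-1}{f'(0)}$; reverting to $x=e^{-t}$ and recalling (as in the text preceding Lemma~\ref{lemma:ass1}) that the diagonalized state vector ${\bf u}_1=V{\bf v}_1$ coincides with $\binom{u_1(x)}{-f(x)u_1'(x)}$ yields exactly \eqref{assunif} with $\eta_5(x)=V\eta(-\ln x)$. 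The remaining — and, I expect, the delicate — point is the bound on $\eta_5$: the $e^{-2s}$ contribution integrates to $O(e^{-2t})=O(x^2)$, and, after the substitution $x=e^{-s}$, the $|\sigma(s)|e^{-s}$ contribution is dominated by $\sup_{0<y\le x}|y^{-1}f(y)-f'(0)-\tfrac12 yf''(0)|$, hence (on shrinking $x_0$) by $C|x^{-1}f(x)-f'(0)-\tfrac12 xf''(0)|$ up to a further $O(x^2)$ term; since $\lambda$ enters only linearly in $R$ and only through the explicit exponential integrals, all constants depend only on $\lambda$, $\von$, and $\sup_{0<x<x_0}|r(x)|$, and everything is locally uniform in $\lambda$. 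The clean way to make this honest under the weak hypothesis that only $f''(0)$ (pointwise) exists — so that $|\sigma(s)|$ cannot be replaced by any power of $e^{-s}$ — is to run the integral inequality for the residual $\tilde{\bf r}(t)={\bf v}_1(t)-\binom{1}{0}-i\lambda e^{-t}\binom{-1}{f'(0)/(1+\von f'(0))}$ through a Gronwall argument, checking that the Gronwall exponent $\exp\!\bigl(C\int_t^\infty\|V^{-1}R(s)V\|\,ds\bigr)$ stays bounded and that the forcing integrates to exactly the two advertised pieces; the diagonalization, the exact form of $V^{-1}R(t)V$, and the two exponential integrals are all routine.
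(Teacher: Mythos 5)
Your argument is correct and is essentially the paper's own proof: the paper also derives Lemma~\ref{lemma:ass4} from the Levinson integral equation \eqref{eq:vbetter}, the exact computation of $V^{-1}R(s)V$ (with the $\rho$-dependence confined to the $(2,2)$ entry, cf.\ \eqref{eq:gdef2}), substitution of the zeroth-order approximation of ${\bf v}_1$ into the integrand as in \eqref{eq:ass18}--\eqref{eq:ass19}, and transformation back via $V$ and $x=\mathrm{e}^{-t}$. You in fact supply details the paper leaves implicit (the two exponential integrals and the identity $V\bigl(\begin{smallmatrix}-1\\ f'(0)/(1+\von f'(0))\end{smallmatrix}\bigr)=(1+\von f'(0))^{-1}\bigl(\begin{smallmatrix}-1\\ f'(0)\end{smallmatrix}\bigr)$); the only loose point is your passage from $\sup_{0<y\le x}|\sigma|$ to the pointwise value at $x$, which is unnecessary anyway since $\int_t^\infty|\sigma(s)|\mathrm{e}^{-s}\,ds=\int_0^x|\sigma(-\ln y)|\,dy=O(x^2)$ once $|r(y)|\le Cy^2$.
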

As an immediate corollary of this Lemma, and in particular of the form of the error term, 
if one changes $f$ in a neighbourhood $[0,\delta]$ to some function $f_\delta$, then the 
same asymptotics will hold in $[0,\delta]$ with $f_\delta'(0)$ replacing
$f'(0)$ on the right hand side of (\ref{assunif}). A similar reasoning
works near $x=\pi$ and gives a rigorous justification of the asymptotic
results used in the proof of Theorem \ref{theorem:final}.

\section{Appendix: Linear operator realisation of $L$}\label{section:dom}
The quadratic form associated to $L$ with domain $C^2(-\pi,\pi)$ is strongly indefinite. 
In fact, its numerical range is the whole of $\C$. In this appendix we show that, 
despite of this strong indefinitness, $L$ is a closable 
operator on $C^2(-\pi,\pi)$. The key to this result depends on an analysis of the behaviour
of $u\in \mathcal{D}_{\text{m}}$ in neighbourhoods of $0$, $\pm\pi$.

\begin{lemma}\label{lemma:dom0}
Let $0<\varepsilon<\pi/2$. There exists a constant
$c_\varepsilon>0$, such that
\be
       \int_{-\pi}^\pi |u'(x)|^2\, dx \leq c_\varepsilon \int_{-\pi}^\pi |Lu(x)|^2\, dx
\label{eq:dom1}\ee 
for all $u\in \mathcal{D}_{\text{m}}$. 
\end{lemma}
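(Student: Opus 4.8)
The plan is to reduce the equation $Lu=g$ to a scalar first--order ODE for the flux $w:=fu'$, and then to patch together two Hardy--type inequalities near the singular points with a Gr\"{o}nwall estimate on the part of the interval where $f$ is bounded away from zero. Write $g:=Lu$. Wherever $f\neq0$ we have $u'=w/f$, so $Lu=g$ becomes
\be
   w'+\frac{1}{\von f}\,w=\frac{g}{i\von}
   \label{eq:fluxode}
\ee
on each of the two intervals $(-\pi,0)$ and $(0,\pi)$; since $u\in\mathcal{D}_{\text{m}}$, on each of them $w$ is locally absolutely continuous, $u'=w/f$ a.e., and \eqref{eq:fluxode} holds a.e. Integrating \eqref{eq:fluxode} with the integrating factor $\mu(x)=\exp\!\bigl(\int^{x}ds/(\von f(s))\bigr)$ expresses $w$ through $g$ up to a single constant of integration.

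Near $x=0$ I would use the hypothesis $u\in L^{2}$ to eliminate that constant. From the behaviour of $f$ at the endpoints (Appendix~\ref{section:ass}; $f(x)=f'(0)x+O(x^{2})$ near $0$) one obtains $\mu(x)\asymp|x|^{\,k-1}$ with $k:=1+\pi/(2\von)>1$, so the homogeneous flux solution $C/\mu$ contributes a term $\asymp|x|^{\,1-k}$ to $u$, which is not square integrable at $0$ when $0<\von<\pi/2$ (indeed whenever $\von<\pi$). Hence $C=0$ on both sides of $0$, and for $x$ near $0$
\be
   w(x)=\frac{1}{i\von\,\mu(x)}\int_{0}^{x}\mu(s)\,g(s)\,ds ,
   \label{eq:wnear0}
\ee
so $u'=w/f$ is, up to a bounded multiplicative factor, the classical Hardy operator $x\mapsto x^{-k}\int_{0}^{x}s^{k-1}g(s)\,ds$. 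Hardy's inequality then gives, for a fixed small $\delta>0$, the bound $\|u'\|_{L^{2}(-\delta,\delta)}\le C_{\von}\|g\|_{L^{2}(-\delta,\delta)}$; in particular $u'\in L^{2}$ near $0$ and $|w(\pm\delta)|\le C_{\von}\|g\|_{L^{2}}$.

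On the complementary set $K_{\delta}$, where $|f|\ge c_{0}>0$, the coefficient $1/(\von f)$ in \eqref{eq:fluxode} is bounded, so Gr\"{o}nwall's inequality started from $x=\pm\delta$ gives $\|w\|_{L^{\infty}(K_{\delta})}\le C(|w(\pm\delta)|+\|g\|_{L^{1}})\le C_{\von}\|g\|_{L^{2}}$, hence $\|u'\|_{L^{2}(K_{\delta})}\le C_{\von}\|g\|_{L^{2}}$ and $|w(\pi-\delta)|,|w(-\pi+\delta)|\le C_{\von}\|g\|_{L^{2}}$. Finally, near $x=\pi$ (the case $x=-\pi$ is analogous via $f(x+\pi)=-f(x)$) I would anchor the representation of $w$ at $x=\pi-\delta$: there $\mu(x)\asymp(\pi-x)^{-(k-1)}$, and $u'=w/f$ is, up to bounded factors, a term $(\pi-x)^{k-2}\cdot(\text{const})$ plus the weighted Hardy operator $x\mapsto(\pi-x)^{k-2}\int_{\pi-\delta}^{x}(\pi-s)^{-(k-1)}g(s)\,ds$. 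The first term is in $L^{2}(\pi-\delta,\pi)$ because $k>3/2$, and the second is bounded on $L^{2}(\pi-\delta,\pi)$, again precisely when $k>3/2$, i.e.\ $\von<\pi$. Adding the three local estimates gives \eqref{eq:dom1} (and shows, in passing, that $u'\in L^{2}(-\pi,\pi)$), the constant $c_{\von}$ depending only on $\von$ and on the fixed quantities $\delta$, $c_{0}$ and the $O(x^{2})$--constant for $f$.

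The main obstacle I expect is the step near $x=\pm\pi$. There the $L^{2}$ hypothesis imposes nothing, since both Frobenius solutions are square integrable, so $w$ is known near $\pm\pi$ only through propagation from the interior; moreover $\mu$ is not even integrable near $\pm\pi$ when $\von<\pi/2$, so $\int_{\pi-\delta}^{x}\mu g$ cannot be controlled crudely by Cauchy--Schwarz. It is the interplay of the weighted Hardy inequality with the vanishing factor $1/(f\mu)\asymp(\pi-x)^{k-2}\in L^{2}(\pi-\delta,\pi)$ that makes $u'$ square integrable up to $\pm\pi$, and the exponent bookkeeping behind this degenerates exactly at $\von=\pi$. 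A routine secondary point is that $f$ is only piecewise differentiable in the interior, but its points of non--differentiability lie inside $K_{\delta}$, where the argument uses no more than $1/f\in L^{\infty}$.
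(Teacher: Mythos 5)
Your proof is essentially correct, and it reaches the conclusion by a genuinely different mechanism than the paper. The paper works with the self-adjoint transform $(pu')' = \tfrac{p}{\varepsilon f}v$ and applies the Hardy inequality \eqref{eq:Hardy} \emph{directly} to $w=pu'$, with exponent $a=-\pi/\varepsilon$ on $(0,\delta)$ and $a=+\pi/\varepsilon$ on the $y=\pi-x$ side. You instead write the first-order flux equation \eqref{eq:fluxode}, solve it explicitly via the integrating factor $\mu$ (so that $p=f\mu$ up to normalisation), and then recognise $u'=w/f$ as a concrete weighted Hardy \emph{integral operator} applied to $g$, whose $L^2$-boundedness you check by the scaling substitution. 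The interior is handled by Gr\"{o}nwall, and the endpoints are glued together via the pointwise bounds on $w(\pm\delta)$ and $w(\pm(\pi-\delta))$. The exponent bookkeeping ($k=1+\pi/(2\von)$ at $0$; $k>3/2$, i.e.\ $\von<\pi$, at $\pm\pi$) is correct, and in fact your argument establishes the lemma under the weaker restriction $\von<\pi$.

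Two points where your approach buys something. First, you make explicit the step in which the hypothesis $u\in L^2$ kills the singular Frobenius/flux solution near $0$, which is only implicit in the paper (without it, the first display in the paper's proof would read $\infty\leq\infty$). Second, and more substantively, you correctly note that near $\pm\pi$ the $L^2$ hypothesis imposes nothing and the homogeneous flux term survives, so $pu'$ tends to a (generically nonzero) constant as $x\to\pi$ and is bounded but nonvanishing at $y=\pi/2$. The paper's invocation of Hardy with exponent $a=\pi/\varepsilon>1$ on $(0,\pi/2)$ requires $\supp(pu')\subset(0,\pi/2)$, and for $a>1$ the inequality genuinely fails if $w$ does not vanish at the right endpoint; the paper does not address this (a cutoff plus a boundary term $|p(\pi/2)u'(\pi/2)|^2$, controllable by interior estimates, would repair it). Your explicit Volterra representation anchored at $x=\pi-\delta$ circumvents this difficulty cleanly, tracking separately the bounded-constant contribution $(\pi-x)^{k-2}$ and the Hardy-operator contribution, both of which are in $L^2$ for $\von<\pi$. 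The price you pay is a somewhat longer argument; the paper's version, once the support condition is patched, is shorter.

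One minor caveat: your appeal to Gr\"{o}nwall on $K_\delta$ tacitly uses that $w=fu'$ is absolutely continuous there even at the finitely many interior points where $f'$ fails to exist; you observe correctly that the flux equation \eqref{eq:fluxode} only needs $1/f\in L^\infty(K_\delta)$ and $Lu\in L^1_{\mathrm{loc}}$, so this is fine, but it is worth stating since the distributional derivative $(fu')'$ rather than $f'u'+fu''$ is what appears in $Lu$.
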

\begin{proof}
Suppose that 
\be
      -iLu=\varepsilon (fu')'+u'=v \in L^2(-\pi,\pi).
\label{eq:dom2}\ee
 We will only show that
\[
       \int_0^\pi |u'(x)|^2\, dx \leq c_\varepsilon \int_0^\pi |v(x)|^2\, dx.
\] 
If we restrict our attention to $x\in[0,\pi]$, the equation \eqref{eq:dom2}
can be written as
\[
     (pu')'=\frac{p}{\varepsilon f} v,  
\]
where $p$ is given by  \eqref{eq:pdef} and satisfies the asymptotes \eqref{eq:pass}.
Note that $p(x)$ as well as $f(x)$ are continuous and positive in $(0,\pi)$.  
By virtue of Hardy's inequality \eqref{eq:Hardy} below, we can find a constant 
$a_\varepsilon>0$ such that
\begin{align*}
    \int_0^{\pi/2} |u'(x)|^2\, dx & \leq a_\varepsilon 
   \int_0^{\pi/2} \frac{|p(x)u'(x)|^2}{x^{2+\pi/\varepsilon}}\, dx \\
& \leq  a_\varepsilon \int_0^{\pi/2} \frac{|(p(x)u'(x))'|^2}{x^{\pi/\varepsilon}}\, dx \\
&= \frac{a_\varepsilon}{\varepsilon^2}  \int_0^{\pi/2} \frac{|p(x)/f(x)|^2|v(x)|^2}{x^{\pi/\varepsilon}}\, dx \leq
 \frac{a_\varepsilon}{\varepsilon^2} \int_0^{\pi/2} |v(x)|^2 \, dx.
\end{align*}
Similarly, if we change variables $y=\pi-x$, we can find $b_\varepsilon>0$ such that 
\begin{align*}
    \int_0^{\pi/2} |u'(y)|^2\, dy & \leq b_\varepsilon 
   \int_0^{\pi/2} \frac{|p(y)u'(y)|^2}{y^{2-\pi/\varepsilon}}\, dy
\leq \frac{b_\varepsilon}{\varepsilon^2} \int_{0}^{\pi/2} |v(y)|^2 \, dy.
\end{align*}
This ensures the desired inequality.
\end{proof}

In the proof of this lemma observe that Hardy's inequality
\be\label{eq:Hardy}
      \frac{(1-a)^2}{4} \int_0^b t^{a-2} |w(t)|^2\,dt \leq  \int_0^b t^a |w'(t)|^2\, dt 
\ee
holds for any $a\in\R$ if $w$ is absolutely continuous and $\supp(w)\subset(0,b)$.
See \cite[Section~5.3]{Davies_2}.

The fact that $L$ is closed in the maximal domain $\mathcal{D}_{\text{m}}$ 
is a straightforward consequence of this lemma.

\begin{corollary}  \label{lemma:dom1}
   Let $L$ denote the operator defined by taking the closure of the
differential expression at the left side of \eqref{eq:1} in $C^2_{\text{per}}(-\pi,\pi)$.
If the resolvent set of $L$ is non-empty, then the spectrum of $L$ is the set of
eigenvalues of the periodic problem associated with \eqref{eq:1}.  
\end{corollary}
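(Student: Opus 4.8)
The plan is to deduce from Lemma~\ref{lemma:dom0} that, under the stated hypothesis, $L$ has compact resolvent; the discreteness of the spectrum and its identification with the eigenvalues of the periodic problem will then follow from routine spectral theory together with the eigenfunction correspondence already recorded in Section~\ref{section:1}.

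First I would extract two consequences of Lemma~\ref{lemma:dom0}. If $u_n\in C^2_{\mathrm{per}}(-\pi,\pi)$ with $u_n\to u$ and $Lu_n\to v$ in $L^2(-\pi,\pi)$, then $(Lu_n)$ is Cauchy, so by \eqref{eq:dom1} the sequence $(u_n')$ is Cauchy in $L^2$; hence $u_n\to u$ in $H^1(-\pi,\pi)$, and passing to the limit in the equation shows $u\in\mathcal D_{\mathrm m}$ with $Lu=v$. Thus the closure $L$ of the statement satisfies $\mathcal D(L)\subseteq\mathcal D_{\mathrm m}\subset H^1(-\pi,\pi)$, and \eqref{eq:dom1} upgrades to the graph-norm bound $\|u\|_{H^1(-\pi,\pi)}^2\le\|u\|_{L^2}^2+c_\varepsilon\|Lu\|_{L^2}^2$ valid for all $u\in\mathcal D(L)$.

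Next, let $\mu_0\in\rho(L)$, which is non-empty by hypothesis. For $v\in L^2(-\pi,\pi)$ put $u=(L-\mu_0)^{-1}v\in\mathcal D(L)$; then $\|u\|_{L^2}\le\|(L-\mu_0)^{-1}\|\,\|v\|_{L^2}$ and $\|Lu\|_{L^2}\le\|v\|_{L^2}+|\mu_0|\,\|u\|_{L^2}$, so the graph-norm bound gives $\|u\|_{H^1(-\pi,\pi)}\le C\|v\|_{L^2}$. Hence $(L-\mu_0)^{-1}$ is bounded as a map $L^2(-\pi,\pi)\to H^1(-\pi,\pi)$, and since the embedding $H^1(-\pi,\pi)\hookrightarrow L^2(-\pi,\pi)$ is compact (Rellich's theorem on the bounded interval $(-\pi,\pi)$), the resolvent $(L-\mu_0)^{-1}$ is compact on $L^2(-\pi,\pi)$. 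By the first resolvent identity the same holds at every $\mu\in\rho(L)$.

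Finally, a closed operator with non-empty resolvent set and compact resolvent has purely discrete spectrum: applying the analytic Fredholm theorem to $\mu\mapsto(L-\mu)^{-1}$ (equivalently, transferring the spectral decomposition of the compact operator $(L-\mu_0)^{-1}$) shows that $\sigma(L)$ consists of isolated eigenvalues of finite algebraic multiplicity with no finite accumulation point, and in particular that $\sigma(L)=\sigma_p(L)$ with no residual or continuous spectrum. It then remains only to invoke the correspondence already recorded in Section~\ref{section:1}: any eigenfunction of $L$ lies in $\mathcal D_{\mathrm m}\subset H^1(-\pi,\pi)$, hence is a scalar multiple of the regular solution $\phi(\cdot,\mu)$ and satisfies $u(-\pi)=u(\pi)$, so $\mu$ solves $\phi(-\pi,\mu)=\phi(\pi,\mu)$; conversely every such $\mu$ produces the eigenfunction $\phi(\cdot,\mu)\in\mathcal D(L)$. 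Therefore $\sigma(L)$ coincides with the set of eigenvalues of the periodic problem associated with \eqref{eq:1}. The substantive difficulty — the $H^1$-regularity of $\mathcal D_{\mathrm m}$ despite the non-integrability of $1/f$ at $x=0,\pm\pi$ — has already been overcome in Lemma~\ref{lemma:dom0}, so the only point requiring care here is the (standard) passage from that regularity to compactness of the resolvent once $\rho(L)\neq\emptyset$ is granted.
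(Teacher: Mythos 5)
Your proposal is correct and follows essentially the same route as the paper: apply Lemma~\ref{lemma:dom0} to show the resolvent maps $L^2(-\pi,\pi)$ boundedly into $H^1(-\pi,\pi)$, use the compact embedding $H^1\hookrightarrow L^2$ to conclude the resolvent is compact, and deduce discrete spectrum coinciding with the periodic eigenvalues. You simply spell out in more detail the intermediate steps (the graph-norm estimate, the first resolvent identity, the eigenfunction correspondence) that the paper leaves implicit.
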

\begin{proof}
Suppose that the resolvent set of $L$ is non-empty, and let $\lambda$ be a member of this
set. Then, by Lemma~\ref{lemma:dom0}, 
$(L-\lambda)^{-1}$ is a bounded operator mapping $L^2(-\pi,\pi)$ into
$H^1(-\pi,\pi)$. As $H^1(-\pi,\pi)$ is compactly contained in $L^2(-\pi,\pi)$,
then necessarily $(L-\lambda)^{-1}$ is a compact operator. The spectrum of $L$ therefore
consists of isolated eigenvalues of finite multiplicity. These are precisely
the set of eigenvalues of the periodic problem associated with \eqref{eq:1}.
\end{proof}

Note that if $f(x)=(2/\pi)\sin(x)$ the hypotheses of this corollary are satisfied.
See \cite{Davies} and \cite{Weir}.


\end{document}